\newtheorem{theorem}{Theorem}[section]
\newtheorem{proposition}[theorem]{Proposition}
\newtheorem{corollary}[theorem]{Corollary}
\newtheorem{definition}{Definition}[section]
\newtheorem{example}[theorem]{Example}
\newtheorem{remark}[theorem]{Remark}
\begin{document}

	\title{Osculating-type Ruled Surfaces in the Euclidean 3-space}
	
	\author{Onur Kaya$^{1}$, Tanju Kahraman$^{1}$, Mehmet Önder$^{2}$ \\[3mm] {\it $^{1}$ Manisa Celal Bayar University, Department of Mathematics, 45140, Manisa, Turkey} \\{E-mails: onur.kaya@cbu.edu.tr, tanju.kahraman@cbu.edu.tr} \\ {\it $^{2}$ Delibekirli Village, Tepe Street, No:63, 31440, Kırıkhan, Hatay, Turkey} \\{E-mail: mehmetonder197999@gmail.com} \date{}}
	
	\maketitle
		
	\begin{abstract}
	In the present paper, a new type of ruled surfaces called osculating-type (OT)-ruled surface is introduced and studied. First, a new orthonormal frame is defined for OT-ruled surfaces. The Gaussian and the mean curvatures of these surfaces are obtained and the conditions for an OT-surface to be flat or minimal are given. Moreover, the Weingarten map of an OT-ruled surface is obtained and the normal curvature, the geodesic curvature and the geodesic torsion of any curve lying on surface are obtained. Finally, some examples related to helices and slant helices are introduced.
	\end{abstract}

	\textbf{AMS Classification:} 53A25, 53A05.
	
	\textbf{Keywords:} Osculating-type ruled surface, minimal surface, geodesic.

	\section{Introduction}
	In the study of fundamental theory of curves and surfaces, the special ones of these geometric topics have been of significant value because of satisfying some particular conditions. In the curve theory, the most famous one of such special curves is general helix for which the tangent vector of the curve always makes a constant angle with a constant direction. The necessary and sufficient condition for a curve to be a general helix is that the ratio of the second curvature $\tau$ to the first curvature $\kappa$ is constant i.e., $\tau / \kappa$ is constant along the curve \cite{Barros}. If the principal normal vector of a curve makes a constant angle with a constant direction, then that curve is called slant helix and the necessary and sufficient condition for a curve to be a slant helix is that the function $\sigma (s) = \left( \frac{\kappa^2}{\left( \kappa^2 + \tau^2 \right)^{3/2}} \left( \frac{\tau}{\kappa} \right)' \right) (s)$ is constant \cite{IzuTakeSlant}.
	
	In the surface theory, the surfaces constructed by the simplest way are important. The well-known example of such surfaces is ruled surface which is generated by a continuous movement of a line along a curve. These surfaces have a wide use in technology and architecture \cite{Emmer}. Furthermore, some special types of these surfaces have particular relationships with helices and slant helices \cite{IzuTakeSlant,IzuTakeSpec,IzuTakeGeom,Izumiyaetal}. In \cite{OnderSlant}, \"{O}nder considered the notion of "slant helix" for ruled surfaces and defined slant ruled surfaces by the property that the components of the frame along the striction curve of ruled surface make constant angles with fixed lines. He has proved that helices or slant helices are the striction curves of developable slant ruled surfaces. Also, he has defined a new kind of ruled surfaces called general rectifying ruled surface for which the generating line of the surface always lies on the rectifying plane of base curve and he has given many properties of such surfaces \cite{OnderRect}.
	
	This study introduces a new type of ruled surfaces called osculating-type (OT)-ruled surfaces. First, a new orthonormal frame and new curvatures for OT-ruled surfaces are obtained and many properties of the surface are given by considering the new frame and its curvatures. Later, the Gaussian curvature $K$ and the mean curvature $H$ of OT-ruled surfaces are given. The set of singular points of such surfaces are introduced and some differential equations characterizing special curves lying on the surface are obtained. Finally, some examples related to helix and slant helix are given.
	
	\section{Preliminaries}
	A ruled surface in $\mathbb{R}^3$ is constructed by a continuous movement of a straight line along a space curve $\alpha$. For an open interval $I \subset \mathbb{R}$, the parametric equation of a ruled surface is given by $\varphi_{(\alpha, q)} (s,u) : I \times \mathbb{R} \rightarrow \mathbb{R}^3$, $\vec{\varphi}_{(\alpha, q)} (s,u) = \vec{\alpha} (s) + u \vec{q} (s)$ where $q: I \rightarrow \mathbb{R}^3$, $\lVert \vec{q} \hspace{1pt} \rVert = 1$ is called director curve and $\alpha: I \rightarrow \mathbb{R}^3$ is called the base curve of the surface $\varphi_{(\alpha, q)}$. The straight lines of the surface defined by $u \rightarrow \vec{\alpha} (s) + u \vec{q} (s)$ are called rulings \cite{IzuTakeSpec}. The ruled surface $\varphi_{(\alpha, q)}$ is called cylindrical if $\vec{q}\hspace{2pt}' = 0$  and non-cylindrical otherwise where $\vec{q}\hspace{2pt}' = \frac{d\vec{q}}{ds}$ \cite{KargerNovak}. A curve $c$ lying on $\varphi_{(\alpha, q)}$ with property that $\langle \vec{c}\hspace{2pt}', \vec{q} \hspace{2pt}' \rangle = 0$ is called striction line of $\varphi_{(\alpha, q)}$. The parametric representation of striction line is given by
	\begin{equation} \label{strictionline}
		\vec{c} (s) = \vec{\alpha} (s) - \frac{\langle \vec{c}\hspace{2pt}' (s), \vec{q} \hspace{2pt}' (s) \rangle}{\langle \vec{q}\hspace{2pt}' (s), \vec{q} \hspace{2pt}'  (s) \rangle} \vec{q} (s)
	\end{equation}
	The striction line is geometrically important because it is the locus of special points called central points for which considering a common perpendicular between two constructive rulings, the foot of common perpendicular on the main ruling is a central point \cite{KargerNovak}.
	
	The unit surface normal or Gauss map $U$ of the ruled surface $\varphi_{(\alpha, q)}$ is defined by
	\begin{equation*}
		\vec{U} (s,u) = \frac{\frac{\partial \vec{\varphi}_{(\alpha, q)}}{\partial s} \times \frac{\partial \vec{\varphi}_{(\alpha, q)}}{\partial u}}{\left\| \frac{\partial \vec{\varphi}_{(\alpha, q)}}{\partial s} \times \frac{\partial \vec{\varphi}_{(\alpha, q)}}{\partial u} \right\|} .
	\end{equation*}
	If $\frac{\partial \vec{\varphi}_{(\alpha, q)}}{\partial s} \times \frac{\partial \vec{\varphi}_{(\alpha, q)}}{\partial u} = 0$ for some points $({{s}_{0}},{{u}_{0}})\in \,I\times \mathbb{R}$ then, such points are called singular points of ruled surface $\varphi_{(\alpha, q)}$. Otherwise, they are called regular points. The surface $\varphi_{(\alpha, q)}$ is called developable if the unit surface normal $U$ along any ruling does not change its direction. Otherwise, $\varphi_{(\alpha, q)}$ is called non-developable or skew. A ruled surface $\varphi_{(\alpha, q)}$ is developable if and only if $\det (\vec{\alpha }', \vec{q}, \vec{q}\hspace{2pt}') = 0$ holds \cite{KargerNovak}.
	
	The unit vectors $\vec{h}= \vec{q} \hspace{2pt}' / \left\| \vec{q} \hspace{2pt}' \right\|$ and $\vec{a} = \vec{q} \times \vec{h}$ are called central normal and central tangent of $\varphi_{(\alpha, q)}$, respectively. Then, the orthonormal frame $\left\{ \vec{q}, \vec{h}, \vec{a} \right\}$ is called the Frenet frame of ruled surface $\varphi_{(\alpha, q)}$.
	
	\begin{definition} \label{qha-slant}
		\cite{OnderSlant} A ruled surface $\varphi_{(\alpha, q)}$ is called $q$-slant or $a$-slant (resp. $h$-slant) ruled surface if its ruling $\vec{q}$  (resp. central normal $\vec{h}$) always makes a constant angle with a fixed direction.
	\end{definition}
	
	The first fundamental form $I$ and second fundamental form $II$ of $\varphi_{(\alpha, q)}$ are defined by
	\begin{equation*}
		I  = E d{{s}^{2}} + 2F ds du + G d{{u}^{2}},
		\hspace{8pt}
		II = L d{{s}^{2}} + 2M ds du + N d{{u}^{2}},
	\end{equation*}
	respectively, where
	\begin{equation} \label{EFGformulas}
		E=\left\langle \frac{\partial {{{\vec{\varphi }}}_{(\alpha ,q)}}}{\partial s},\frac{\partial {{{\vec{\varphi }}}_{(\alpha ,q)}}}{\partial s} \right\rangle,
		\hspace{6pt}
		F=\left\langle \frac{\partial {{{\vec{\varphi }}}_{(\alpha ,q)}}}{\partial s},\frac{\partial {{{\vec{\varphi }}}_{(\alpha ,q)}}}{\partial u} \right\rangle,
		\hspace{6pt}
		G=\left\langle \frac{\partial {{{\vec{\varphi }}}_{(\alpha ,q)}}}{\partial u},\frac{\partial {{{\vec{\varphi }}}_{(\alpha ,q)}}}{\partial u} \right\rangle,
	\end{equation}
	\begin{equation} \label{LMNformulas}
		L=\left\langle \frac{{{\partial }^{2}}{{{\vec{\varphi }}}_{(\alpha ,q)}}}{\partial {{s}^{2}}},\vec{U} \right\rangle,
		\hspace{6pt}
		M=\left\langle \frac{{{\partial }^{2}}{{{\vec{\varphi }}}_{(\alpha ,q)}}}{\partial s\partial u},\vec{U} \right\rangle,
		\hspace{6pt}
		N=\left\langle \frac{{{\partial }^{2}}{{{\vec{\varphi }}}_{(\alpha ,q)}}}{\partial {{u}^{2}}},\vec{U} \right\rangle .
	\end{equation}
	The Gaussian curvature $K$ and the mean curvature $H$ are defined by
	\begin{equation} \label{GaussianCurvatureFormula}
		K=\frac{LN-{{M}^{2}}}{EG-{{F}^{2}}},
	\end{equation}
	\begin{equation} \label{MeanCurvatureFormula}
		H=\frac{EN-2FM+GL}{2(EG-{{F}^{2}})}.
	\end{equation}
	respectively. An arbitrary surface is called minimal if $H=0$ at all points of the surface. Furthermore, a ruled surface is developable (or flat) if and only if $K=0$ \cite{doCarmo}.
	
	\begin{theorem}
		(Catalan Theorem) \cite{FomenkoTuzhilin} Among all ruled surfaces except planes only the helicoid and fragments of it are minimal.
	\end{theorem}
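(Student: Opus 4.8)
The plan is to work with a canonical parametrization of the ruled surface and to exploit the fact that a ruled surface is affine in the ruling parameter $u$, so that $\partial^2 \vec\varphi_{(\alpha,q)}/\partial u^2 = 0$ and hence $N = 0$ in \eqref{LMNformulas}. Since $\vec q$ is a unit vector one also has $G = 1$, so by \eqref{MeanCurvatureFormula} the minimality condition $H = 0$ reduces to $L = 2FM$. The strategy is therefore to compute $E,F,L,M$ explicitly in a well-adapted frame, impose $L - 2FM = 0$, and read off which surfaces survive.

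First I would dispose of the cylindrical case $\vec q\,' = 0$ separately: such a surface is a cylinder over a plane profile, one of whose principal curvatures vanishes while the other is the curvature of the profile, so $H = 0$ forces the profile to be a line and the surface to be a plane. For the non-cylindrical case I would take the base curve to be the striction line $\vec c$ of \eqref{strictionline} and reparametrize so that $\lVert \vec q\,'\rVert = 1$; then the ruled-surface Frenet frame $\{\vec q, \vec h, \vec a\}$ satisfies structure equations $\vec q\,' = \vec h$, $\vec h\,' = -\vec q + \gamma\, \vec a$, $\vec a\,' = -\gamma\, \vec h$ for some function $\gamma$, and the striction property $\langle \vec c\,', \vec q\,'\rangle = 0$ lets me write $\vec c\,' = p\,\vec q + r\,\vec a$.

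Next I would substitute $\vec\varphi_{(\alpha,q)} = \vec c + u\,\vec q$ and compute. A direct calculation gives $E = p^2 + u^2 + r^2$, $F = p$, $G = 1$, the Gauss map $\vec U = (r\,\vec h - u\,\vec a)/\sqrt{r^2 + u^2}$, and then $M = r/\sqrt{r^2+u^2}$, $N = 0$, together with $L = \left(pr - \gamma(r^2+u^2) - u\,r'\right)/\sqrt{r^2+u^2}$. Plugging these into $L - 2FM = 0$ and clearing the denominator yields $-pr - \gamma(r^2 + u^2) - u\,r' = 0$. The decisive observation is that this must hold identically in $u$ along each ruling, so comparing coefficients of $u^2$, $u^1$, $u^0$ forces $\gamma = 0$, $r' = 0$ (so $r$ is constant), and $pr = 0$.

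Finally I would integrate the resulting structure equations. If $r = 0$ then $\vec a$ is constant and $\vec c\,'$ is orthogonal to $\vec a$, so the whole surface lies in a fixed plane; this is the excluded planar case. If $r \neq 0$ then $p = 0$, so $\vec a$ is a constant unit vector, $\vec c\,' = r\,\vec a$ shows the striction line is a straight axis traversed with constant speed, and $\vec q\,' = \vec h$, $\vec h\,' = -\vec q$ show that $\vec q$ rotates uniformly in the plane orthogonal to $\vec a$; recognizing $\vec\varphi_{(\alpha,q)} = \vec c_0 + rs\,\vec a + u\,\vec q(s)$ as the standard helicoid (up to a rigid motion and the choice of parameter domain) completes the argument. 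I expect the genuinely delicate step to be this last identification: one must treat the reparametrization and rigid-motion freedom carefully and phrase the conclusion as \emph{the helicoid and fragments of it}, whereas the coefficient comparison itself is routine.
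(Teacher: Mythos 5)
The first thing to say is that the paper contains no proof of this statement at all: it is quoted as the classical Catalan theorem with attribution to \cite{FomenkoTuzhilin}, and is used later only as a black box (to conclude that a non-planar minimal OT-ruled surface is a helicoid). So there is nothing internal to compare your argument against; what you have written is a genuine proof, and it is correct — indeed it is the standard striction-line proof of Catalan's theorem. Your computations check out: with the striction line as base curve, $\lVert \vec{q}\,'\rVert=1$, structure equations $\vec{q}\,'=\vec{h}$, $\vec{h}\,'=-\vec{q}+\gamma\vec{a}$, $\vec{a}\,'=-\gamma\vec{h}$, and $\vec{c}\,'=p\,\vec{q}+r\,\vec{a}$, one indeed gets $E=p^{2}+r^{2}+u^{2}$, $F=p$, $G=1$, $N=0$, $M=r/\sqrt{r^{2}+u^{2}}$, $L=\bigl(pr-\gamma(r^{2}+u^{2})-ur'\bigr)/\sqrt{r^{2}+u^{2}}$, so $EN-2FM+GL=0$ is exactly your identity $-pr-\gamma(r^{2}+u^{2})-ur'=0$; since this is a polynomial identity in $u$ on each ruling, it forces $\gamma\equiv 0$, $r'\equiv 0$ and $pr\equiv 0$, and the constancy of $r$ cleanly splits the cases $r=0$ (plane) and $r\neq 0$, $p\equiv 0$ (helicoid, after integrating $\vec{q}\,''=-\vec{q}$ in the fixed plane orthogonal to the constant vector $\vec{a}$). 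Two points deserve tightening in a final write-up. First, your cylindrical/non-cylindrical dichotomy is stated globally, but $\vec{q}\,'$ may vanish on only part of the parameter interval; the mixed case is excluded by continuity of the Gaussian curvature, since on a helicoidal piece $K=-r^{2}/(r^{2}+u^{2})^{2}$ stays bounded away from zero as one approaches the boundary of that piece, while $K\equiv 0$ on any cylindrical (hence planar) piece. Second, the striction line need not be a regular curve (for a cone $\vec{c}\,'=0$); your argument never divides by $\lVert\vec{c}\,'\rVert$, so this case is silently covered by $p=r=0$, but it is worth saying explicitly rather than letting the notation suggest $\vec{c}$ is regular.
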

	
	\section{Osculating-type Ruled Surfaces}
	In this section, we define the osculating-type ruled surface of a curve $\alpha$ such that the ruling of the surface always lies in the osculating plane of $\alpha$ and also $\alpha$ is the base curve of the surface. Such a surface is defined as follows:
	
	\begin{definition} \label{OT-ruledSurfaceDefinition}
		Let $\alpha : I \subset \mathbb{R} \rightarrow {{\mathbb{R}}^{3}}$ be a smooth curve in the Euclidean 3-space $\mathbb{E}^3$ with arc-length parameter $s$, curvature $\kappa (s)$, torsion $\tau (s)$ and Frenet frame $\left\{ \vec{T}(s),\vec{N}(s),\vec{B}(s) \right\}$. Then, the ruled surface ${{\varphi }_{(\alpha ,{{q}_{o}})}}:I\times \mathbb{R}\to {{\mathbb{R}}^{3}}$ given by the parametric form
		\begin{equation} \label{OT-ruledSurfaceEquation}
			{{\vec{\varphi }}_{(\alpha ,{{q}_{o}})}}(s,u)=\vec{\alpha }(s)+u{{\vec{q}}_{o}}(s),
			\hspace{8pt}
			{{\vec{q}}_{o}}(s)=\cos \theta \vec{T}(s)+\sin \theta \vec{N}(s)
		\end{equation}
		is called the osculating-type (OT)-ruled surface of $\alpha$ where $\theta =\theta (s)$ is ${{C}^{\infty}}$-scalar angle function of arc-length parameter $s$ between unit vectors $\vec{q}_o$ and $\vec{T}$. Here, we use the index “$o$” to emphasize that the ruling always lies on the osculating plane $sp\left\{ \vec{T},\vec{N} \right\}$ of base curve $\alpha$.
	\end{definition}
	
	As we see from equation (\ref{OT-ruledSurfaceEquation}), when $\theta (s)=k\pi, (k \in \mathbb{Z})$, for all $s \in I$, the ruling becomes ${{\vec{q}}_{o}}=\pm \vec{T}$ and the OT-ruled surface $\varphi_{(\alpha, q_o)}$ becomes the developable tangent surface $\varphi_{(\alpha, T)}$ of $\alpha$. Similarly, when $\theta (s)=\pi /2+k\pi, (k \in \mathbb{Z})$ for all $s \in I$, the ruling becomes ${{\vec{q}}_{o}} = \pm \vec{N}$ and the OT-ruled surface $\varphi_{(\alpha, q_o)}$ becomes the principal normal surface $\varphi_{(\alpha, N)}$ of $\alpha$.
	
	\begin{remark} \label{remark}
		If $\alpha$ is a straight line, then $\varphi_{(\alpha, T)}$ is not a surface, only a line. So, for the case ${{\varphi }_{(\alpha ,{{q}_{o}})}}={{\varphi }_{(\alpha ,T)}}$, we always assume that $\alpha$ is not a straight line, i.e., $\kappa \ne 0$.
	\end{remark}
	
	Considering (\ref{OT-ruledSurfaceEquation}) and the fact that the binormal vector $\vec{B}$ of $\alpha$ is perpendicular to $sp\left\{ \vec{T},\vec{N} \right\}$, we get $\left\langle {{{\vec{q}}}_{o}},\vec{B} \right\rangle =0$. Therefore, we can define a unit vector $\vec{r} (s)$ as follows,
	\begin{equation} \label{r-vector}
		\vec{r}={{\vec{q}}_{o}}\times \vec{B}=\sin \theta \,\vec{T}-\cos \theta \vec{N}.
	\end{equation}
	Then, the frame $\left\{ {{{\vec{q}}}_{o}},\vec{B},\vec{r} \right\}$ is an orthonormal moving frame along $\alpha$ on the OT-ruled surface $\varphi_{(\alpha, q_o)}$. From equations (\ref{OT-ruledSurfaceEquation}) and (\ref{r-vector}), the relations between that frame and Frenet frame of $\alpha$ are given by $\vec{T}=\cos \theta \,{{\vec{q}}_{o}}+\sin \theta \,\vec{r}$ and $\vec{N}=\sin \theta \,{{\vec{q}}_{o}}-\cos \theta \,\vec{r}$. After some computations, for the derivative formulae of new frame $\left\{ {{{\vec{q}}}_{o}},\vec{B},\vec{r} \right\}$, we get
	\begin{equation*}
		\begin{bmatrix}
			{{{{\vec{q}}\hspace{2pt}'_{\hspace{-2pt}o}}}} \\
			{{\vec{B}}'} \\
			{{\vec{r}}\hspace{2pt}'} \\
		\end{bmatrix}
		=
		\begin{bmatrix}
			0 & \mu  & -\eta \\
			-\mu  & 0 & \xi \\
			\eta  & -\xi  & 0 \\
		\end{bmatrix}
		\begin{bmatrix}
			{{{\vec{q}}}_{o}} \\
			{\vec{B}} \\
			{\vec{r}} \\
		\end{bmatrix}
	\end{equation*}
	where $\eta (s)={\theta }'+\kappa $, $\mu (s)=\tau \sin \theta $, $\xi (s)=\tau \cos \theta $ are called the curvatures of OT-ruled surface $\varphi_{(\alpha, q_o)}$ according to the frame $\left\{ {{{\vec{q}}}_{o}},\vec{B},\vec{r} \right\}$. Then, the relationships between the curvatures $\kappa$, $\tau$ of base curve $\alpha$ and the curvatures $\eta$, $\mu$, $\xi$ of OT-ruled surface of OT-ruled surface $\varphi_{(\alpha, q_o)}$ are obtained as $\kappa =\eta -{\theta }'$, $\tau =\pm \sqrt{{{\mu }^{2}}+{{\xi }^{2}}}$. Now, using these relationships and considering the characterizations for general helix and slant helix, the following theorem is obtained:
	
	\begin{theorem} \label{alphahelixslanthelix}
		For the OT-ruled surface $\varphi_{(\alpha, q_o)}$, we have that
		\begin{enumerate} [(i)]
			\item $\alpha$ is a plane curve if and only if both $\mu$ and $\xi$ vanish.
			\item $\alpha$ is a general helix if and only if the function $\rho (s)=\pm \frac{\sqrt{{{\mu }^{2}}+{{\xi }^{2}}}}{\eta - \theta'}$ is constant.
			\item $\alpha$ is a slant helix if and only if the function
			\begin{equation*}
				\sigma (s)=\pm \frac{(\mu {\mu }'+\xi {\xi }')(\eta -{\theta }')-({{\mu }^{2}}+{{\xi }^{2}})({\eta }'-{\theta }'')}{\left[ {{(\eta -{\theta }')}^{2}}+{{\mu }^{2}}+{{\xi }^{2}} \right]{{\left( {{\mu }^{2}}+{{\xi }^{2}} \right)}^{1/2}}}
			\end{equation*}
			is constant.
		\end{enumerate}
	\end{theorem}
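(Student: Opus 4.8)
The plan is to reduce each of the three assertions to a standard Frenet-frame characterization and then translate it through the two identities $\kappa = \eta - \theta'$ and $\tau = \pm\sqrt{\mu^2+\xi^2}$ already extracted from the structure equations of the frame $\{\vec{q}_o,\vec{B},\vec{r}\}$; it is also convenient to record $\mu^2+\xi^2 = \tau^2$, which is immediate from $\mu = \tau\sin\theta$ and $\xi = \tau\cos\theta$. For (i), I would use that $\alpha$ is a plane curve if and only if $\tau \equiv 0$. Since $\mu^2+\xi^2 = \tau^2$ with $\mu^2,\xi^2 \ge 0$, the vanishing of $\tau$ at a point is equivalent to the simultaneous vanishing of $\mu$ and $\xi$ there; letting $s$ range over $I$ gives the stated equivalence.

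For (ii), I invoke the characterization that $\alpha$ is a general helix if and only if $\tau/\kappa$ is constant. Substituting the two identities gives $\tau/\kappa = \pm\sqrt{\mu^2+\xi^2}/(\eta-\theta') = \rho(s)$, so $\tau/\kappa$ is constant exactly when $\rho$ is, under the standing nondegeneracy $\eta-\theta' = \kappa \neq 0$ that makes the quotient meaningful. For (iii), I use the Izumiya--Takeuchi criterion recalled in the introduction: $\alpha$ is a slant helix if and only if $\sigma = \frac{\kappa^2}{(\kappa^2+\tau^2)^{3/2}}\left(\frac{\tau}{\kappa}\right)'$ is constant. The entire task here is to rewrite this right-hand side in terms of $\eta,\mu,\xi,\theta$, after which constancy of $\sigma$ is literally the slant-helix condition.

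To carry out (iii), I substitute $\kappa = \eta-\theta'$ and $\tau = \pm\sqrt{\mu^2+\xi^2}$ and differentiate $\tau/\kappa$ by the quotient rule, using $\frac{d}{ds}\sqrt{\mu^2+\xi^2} = (\mu\mu'+\xi\xi')/\sqrt{\mu^2+\xi^2}$; this yields $\left(\frac{\tau}{\kappa}\right)' = \pm\frac{(\mu\mu'+\xi\xi')(\eta-\theta') - (\mu^2+\xi^2)(\eta'-\theta'')}{\sqrt{\mu^2+\xi^2}\,(\eta-\theta')^2}$. Multiplying by the factor $\frac{(\eta-\theta')^2}{[(\eta-\theta')^2+\mu^2+\xi^2]^{3/2}}$ cancels $(\eta-\theta')^2$ and leaves a rational expression of the displayed form for $\sigma$, whose constancy is then equivalent to $\alpha$ being a slant helix. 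I expect the only genuine obstacle to lie in the bookkeeping of this last part: keeping the half-integer powers of $\kappa^2+\tau^2 = (\eta-\theta')^2+\mu^2+\xi^2$ straight (in particular the exponent on the bracketed term after cancellation) and propagating the ambiguous sign from $\tau = \pm\sqrt{\mu^2+\xi^2}$ correctly through the differentiation. Parts (i) and (ii) are essentially immediate once the two translation identities are in hand.
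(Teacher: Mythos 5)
Your proposal is exactly the paper's argument: the paper gives no separate proof of this theorem beyond the sentence preceding it, which says precisely what you do --- substitute $\kappa=\eta-\theta'$ and $\tau=\pm\sqrt{\mu^{2}+\xi^{2}}$ (so $\mu^{2}+\xi^{2}=\tau^{2}$) into the standard characterizations: plane curve iff $\tau\equiv 0$, general helix iff $\tau/\kappa$ is constant, slant helix iff the Izumiya--Takeuchi function is constant. Parts (i) and (ii) are correct as you state them. For (iii), however, the step you flagged as mere bookkeeping is exactly where your outline and the printed statement part company. Carrying out your own computation,
\begin{equation*}
\sigma=\frac{\kappa^{2}}{(\kappa^{2}+\tau^{2})^{3/2}}\left(\frac{\tau}{\kappa}\right)'
=\pm\,\frac{(\mu\mu'+\xi\xi')(\eta-\theta')-(\mu^{2}+\xi^{2})(\eta'-\theta'')}{\left[(\eta-\theta')^{2}+\mu^{2}+\xi^{2}\right]^{3/2}\left(\mu^{2}+\xi^{2}\right)^{1/2}},
\end{equation*}
so the bracketed term carries the exponent $3/2$, not $1$ as in the theorem's display: multiplying the quotient-rule expression by $\kappa^{2}/(\kappa^{2}+\tau^{2})^{3/2}$ cancels $(\eta-\theta')^{2}$ but leaves the full power $3/2$ on $\kappa^{2}+\tau^{2}$, and nothing reduces it further. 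Hence your derivation is right, but its output is \emph{not} ``of the displayed form'': the printed $\sigma$ differs from the genuine slant-helix function by the factor $\left[(\eta-\theta')^{2}+\mu^{2}+\xi^{2}\right]^{1/2}=(\kappa^{2}+\tau^{2})^{1/2}$, which is generally non-constant, so constancy of the printed expression is not equivalent to $\alpha$ being a slant helix. You should conclude with the $3/2$ exponent and note that the theorem as stated contains a typo to be corrected accordingly; with that emendation your proof is complete and coincides with the paper's intended argument.
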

	
	Let now consider the special case that the base curve $\alpha$ is a plane curve, i.e., $\tau = 0$. Then, $\alpha$ lies on the osculating plane $sp\left\{ \vec{T},\vec{N} \right\}$ and has constant binormal vector $\vec{B}$. Since, the unit surface normal $\vec{U}$ of OT-ruled surface $\varphi_{(\alpha, q_o)}$ is always perpendicular to both $\vec{q}_o$ and $\vec{T}$, we have that $\vec{U}=\pm \vec{B}$. Then, the OT-ruled surface has a constant unit normal, that is, it is a plane. Conversely, if the OT-ruled surface $\varphi_{(\alpha, q_o)}$ is a plane with constant unit normal $\vec{U}$, since $\vec{U}\bot sp\left\{ {{{\vec{q}}}_{o}},\vec{T} \right\}$, from (\ref{OT-ruledSurfaceEquation}) we get $\vec{U}\bot sp\left\{ \vec{T},\vec{N} \right\}$ which gives $\vec{U}=\pm \vec{B}$ is a constant vector. Then, $\tau = 0$, i.e., $\alpha$ is a plane curve and we have the followings:
	
	\begin{theorem} \label{PlaneOT}
		The OT-ruled surface $\varphi_{(\alpha, q_o)}$ is a plane if and only if the base curve $\alpha$ is a plane curve.
	\end{theorem}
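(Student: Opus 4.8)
The plan is to use the standard fact that a connected regular surface is contained in a plane if and only if its Gauss map $\vec{U}$ is a constant vector, and to translate each direction into the condition $\tau = 0$ that characterizes plane curves (equivalently, by Theorem~\ref{alphahelixslanthelix}(i), into $\mu = \xi = 0$, since $\mu^2 + \xi^2 = \tau^2$). Throughout I would work with the Frenet frame of $\alpha$ together with the orthonormal frame $\{\vec{q}_o, \vec{B}, \vec{r}\}$, exploiting that $\vec{q}_o$ lies in the osculating plane $sp\{\vec{T}, \vec{N}\}$ by its very definition in (\ref{OT-ruledSurfaceEquation}), so that $\langle \vec{q}_o, \vec{B}\rangle = 0$.

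For the forward implication, suppose $\alpha$ is a plane curve, so $\tau = 0$. Then $\vec{B}' = -\tau\vec{N} = 0$, i.e. the binormal $\vec{B} = \vec{B}_0$ is a fixed unit vector. Fixing a base point $s_0$, I would show that every surface point $\vec{\varphi}_{(\alpha, q_o)}(s,u) = \vec{\alpha}(s) + u\vec{q}_o(s)$ satisfies $\langle \vec{\varphi}_{(\alpha, q_o)}(s,u) - \vec{\alpha}(s_0), \vec{B}_0\rangle = 0$: the base-curve contribution vanishes because $\langle\vec{\alpha}', \vec{B}_0\rangle = \langle\vec{T}, \vec{B}\rangle = 0$, and the ruling contribution vanishes because $\langle\vec{q}_o, \vec{B}\rangle = 0$ by (\ref{OT-ruledSurfaceEquation}). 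Hence the whole surface lies in the fixed plane through $\vec{\alpha}(s_0)$ with normal $\vec{B}_0$, so it is a plane.

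For the converse, suppose $\varphi_{(\alpha, q_o)}$ is a plane, so its unit normal $\vec{U}$ is constant. Along the base curve ($u=0$) the tangent plane is spanned by $\partial\vec{\varphi}_{(\alpha, q_o)}/\partial s|_{u=0} = \vec{T}$ and $\partial\vec{\varphi}_{(\alpha, q_o)}/\partial u = \vec{q}_o$, so $\vec{U}\perp sp\{\vec{T}, \vec{q}_o\}$. Since $\vec{q}_o = \cos\theta\,\vec{T} + \sin\theta\,\vec{N}$, whenever $\sin\theta \neq 0$ the vectors $\vec{T}$ and $\vec{q}_o$ span the full osculating plane, giving $\vec{U}\perp sp\{\vec{T}, \vec{N}\}$ and therefore $\vec{U} = \pm\vec{B}$. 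As $\vec{U}$ is constant, $\vec{B}$ is constant, so $\vec{B}' = -\tau\vec{N} = 0$ forces $\tau = 0$, i.e. $\alpha$ is a plane curve.

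The step needing the most care is the degenerate case $\sin\theta = 0$, where $\vec{q}_o = \pm\vec{T}$ and the identification $sp\{\vec{T}, \vec{q}_o\} = sp\{\vec{T}, \vec{N}\}$ collapses. Here $\varphi_{(\alpha, q_o)}$ is the tangent developable $\varphi_{(\alpha, T)}$, for which Remark~\ref{remark} lets me assume $\kappa \neq 0$; a direct computation of the cross product $\partial_s\vec{\varphi}_{(\alpha, q_o)}\times\partial_u\vec{\varphi}_{(\alpha, q_o)}$ shows its unit normal is again $\pm\vec{B}$, so the conclusion $\vec{U} = \pm\vec{B}$ constant $\Rightarrow \tau = 0$ persists. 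Combining the two directions then yields the stated equivalence.
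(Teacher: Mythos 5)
Your proof is correct and takes essentially the same route as the paper: both directions rest on identifying the unit surface normal with $\pm\vec{B}$ and using that $\vec{B}$ is constant precisely when $\tau=0$. If anything, you are more careful than the paper, whose converse silently assumes $\sin\theta\neq 0$ (so that $\vec{T}$ and $\vec{q}_o$ span the osculating plane) and never treats the degenerate tangent-surface case $\varphi_{(\alpha,q_o)}=\varphi_{(\alpha,T)}$ that you check separately.
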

	
	Clearly, Theorem 3.4 gives the following corollary:
	
	\begin{theorem} \label{notTN}
		If ${{\varphi}_{(\alpha ,{{q}_{o}})}}\ne {{\varphi}_{(\alpha ,T)}}$ and ${{\varphi}_{(\alpha ,{{q}_{o}})}}\ne {{\varphi}_{(\alpha ,N)}}$,  the followings are equivalent
			\begin{equation*}
				\begin{split}
					& (i) \hspace{4pt} \alpha \hspace{4pt} \textit{is a plane curve}. \hspace{16pt} (ii) \hspace{4pt} \textit{The OT-ruled surface} \hspace{4pt} \varphi_{(\alpha, q_o)} \hspace{4pt} \textit{is a plane}. \\
					& (iii) \hspace{4pt} \mu=0. \hspace{65pt} (iv) \hspace{4pt} \xi=0.
				\end{split}
			\end{equation*}
	\end{theorem}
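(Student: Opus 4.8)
The plan is to deduce all four equivalences as a short corollary of the two theorems already established, using only the explicit formulas $\mu = \tau\sin\theta$ and $\xi = \tau\cos\theta$ for the OT-curvatures. First I would translate the two standing hypotheses into conditions on the angle function: the identification $\varphi_{(\alpha,q_o)} = \varphi_{(\alpha,T)}$ holds exactly when $\vec{q}_o = \pm\vec{T}$, i.e. when $\sin\theta = 0$, and $\varphi_{(\alpha,q_o)} = \varphi_{(\alpha,N)}$ holds exactly when $\vec{q}_o = \pm\vec{N}$, i.e. when $\cos\theta = 0$. Hence the assumptions $\varphi_{(\alpha,q_o)} \ne \varphi_{(\alpha,T)}$ and $\varphi_{(\alpha,q_o)} \ne \varphi_{(\alpha,N)}$ are to be read as $\sin\theta \ne 0$ and $\cos\theta \ne 0$, so that neither factor can vanish in the products defining $\mu$ and $\xi$.

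The equivalence (i) $\Leftrightarrow$ (ii) is exactly Theorem \ref{PlaneOT}, so nothing is needed there. For the rest I would run a short cycle through the condition $\tau = 0$, recalling the classical fact that a space curve is planar precisely when its torsion vanishes, so that (i) is the same as ``$\tau = 0$''. Assuming (iii), i.e. $\mu = \tau\sin\theta = 0$, the first hypothesis $\sin\theta \ne 0$ forces $\tau = 0$, whence $\xi = \tau\cos\theta = 0$, giving (iv) and (i). Symmetrically, assuming (iv), i.e. $\xi = \tau\cos\theta = 0$, the second hypothesis $\cos\theta \ne 0$ forces $\tau = 0$ and hence $\mu = 0$, giving (iii) and (i). Finally, if (i) holds then $\tau = 0$ and both $\mu$ and $\xi$ vanish simultaneously, so (iii) and (iv) follow; this last step is also precisely Theorem \ref{alphahelixslanthelix}(i). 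Chaining these implications closes the loop among (i), (ii), (iii), (iv).

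The only genuinely delicate point is the translation of the two surface-level hypotheses into usable inequalities and the cancellation of factors in $\mu$ and $\xi$. If one reads the hypotheses only as ``the surfaces differ'', i.e. that $\sin\theta$ and $\cos\theta$ are merely not identically zero, then from $\mu = 0$ one can conclude $\tau = 0$ only on the open set where $\sin\theta \ne 0$, and an additional continuity (or analyticity) argument would be needed to propagate $\tau = 0$ to all of $I$. I expect the intended reading is the pointwise one, namely that the ruling is genuinely neither tangent to $\alpha$ nor directed along its principal normal at any parameter value, so that $\sin\theta \ne 0$ and $\cos\theta \ne 0$ throughout; under this reading the cancellations are immediate and the proof reduces to the routine chase above. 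I would state this interpretation explicitly at the outset to keep the argument airtight.
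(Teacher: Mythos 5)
Your proof is correct and takes essentially the same route the paper intends: the paper gives no argument at all, presenting the theorem as an immediate corollary of Theorem \ref{PlaneOT} (your (i) $\Leftrightarrow$ (ii)) combined with the formulas $\mu=\tau\sin\theta$ and $\xi=\tau\cos\theta$, which is precisely the cycle you spell out. Your explicit insistence on the pointwise reading of the hypotheses ($\sin\theta\neq 0$ and $\cos\theta\neq 0$ at every parameter value) is warranted rather than pedantic, since under the weaker reading that the surfaces merely differ as surfaces, $\mu\equiv 0$ only forces $\tau=0$ on the set where $\sin\theta\neq 0$, and without an extra propagation argument (which can genuinely fail if $\sin\theta$ vanishes on a subinterval where $\tau$ is supported) the equivalences would break down --- a subtlety the paper silently glosses over.
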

	
	Now, we will give other characterizations and geometric properties of the OT-ruled surfaces.
	
	\begin{theorem}
		The set of the singular points of OT-ruled surface $\varphi_{(\alpha, q_o)}$ is given by
		\begin{equation*}
			S = \left\{ ({{s}_{0}},{{u}_{0}}) \in I \times \mathbb{R}: \theta ({{s}_{0}})=k\pi, {{u}_{0}}=0, k\in \mathbb{Z} \right\}.
		\end{equation*}
	\end{theorem}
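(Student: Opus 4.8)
The plan is to compute the vanishing locus of $\vec{\varphi}_s\times\vec{\varphi}_u$ directly in the adapted frame $\{\vec{q}_o,\vec{B},\vec{r}\}$, where the derivative formulae make the algebra transparent. First I would record the two tangent vectors, $\vec{\varphi}_u=\vec{q}_o$ and $\vec{\varphi}_s=\vec{\alpha}'+u\vec{q}\hspace{2pt}'_{\hspace{-2pt}o}=\vec{T}+u\vec{q}\hspace{2pt}'_{\hspace{-2pt}o}$. Substituting the relation $\vec{T}=\cos\theta\,\vec{q}_o+\sin\theta\,\vec{r}$ together with the first derivative formula $\vec{q}\hspace{2pt}'_{\hspace{-2pt}o}=\mu\vec{B}-\eta\vec{r}$, this becomes $\vec{\varphi}_s=\cos\theta\,\vec{q}_o+u\mu\,\vec{B}+(\sin\theta-u\eta)\vec{r}$, expressed entirely in the orthonormal frame.

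Next I would form the cross product. Since $\{\vec{q}_o,\vec{B},\vec{r}\}$ is right-handed (so $\vec{q}_o\times\vec{B}=\vec{r}$ and $\vec{r}\times\vec{q}_o=\vec{B}$), crossing with $\vec{\varphi}_u=\vec{q}_o$ annihilates the $\vec{q}_o$-component and yields $\vec{\varphi}_s\times\vec{\varphi}_u=(\sin\theta-u\eta)\vec{B}-u\mu\,\vec{r}$. Because $\vec{B}$ and $\vec{r}$ are orthonormal, the squared norm is $(\sin\theta-u\eta)^2+u^2\mu^2$, so a point is singular exactly when the two scalar equations $\sin\theta-u\eta=0$ and $u\mu=0$ hold simultaneously.

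The remaining work is the case analysis, which I expect to be the main obstacle since it is where the hypotheses must be used carefully. Writing $\mu=\tau\sin\theta$, I would split on whether $\sin\theta(s_0)$ vanishes. If $\sin\theta(s_0)\neq 0$, then in the non-planar case $\tau\neq 0$ one has $\mu\neq 0$, so $u\mu=0$ forces $u=0$, whence $\sin\theta=u\eta=0$, a contradiction; hence no singular point occurs there. If instead $\sin\theta(s_0)=0$, i.e. $\theta(s_0)=k\pi$, then $\mu$ vanishes automatically and the second equation holds for every $u$, while the first reduces to $u\eta=0$; since $\eta=\theta'+\kappa\neq 0$ (in the tangent-surface case this is exactly the standing hypothesis $\kappa\neq 0$ of Remark~\ref{remark}), this forces $u_0=0$. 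Combining the two cases leaves precisely $S=\{(s_0,u_0):\theta(s_0)=k\pi,\ u_0=0\}$.

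I should flag that the forward inclusion is immediate, since at any $(s_0,0)$ with $\theta(s_0)=k\pi$ both coefficients vanish; the substance is the reverse inclusion, that these are \emph{all} the singular points. That direction genuinely relies on $\tau\neq 0$ (to discard the $\sin\theta\neq 0$ branch) and on $\eta\neq 0$ at the exceptional values $\theta=k\pi$ (to discard rulings with $u\neq 0$). The degenerate configuration in which $\theta=k\pi$ and $\eta=0$ coincide — so that the ruling is momentarily stationary, $\vec{q}\hspace{2pt}'_{\hspace{-2pt}o}=0$ — is the only obstruction to the stated description, and is excluded in the setting under consideration.
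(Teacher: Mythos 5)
Your proposal is correct and takes essentially the same route as the paper: the same expression of $\vec{\varphi}_s$ in the frame $\{\vec{q}_o,\vec{B},\vec{r}\}$, the same cross product $(\sin\theta-u\eta)\vec{B}-u\mu\,\vec{r}$, the same two-equation system, and an equivalent case analysis (you split on $\sin\theta(s_0)$, the paper on $u_0$, but both hinge on the same two facts: $\tau\neq 0$ via the plane characterization, and $\eta\neq 0$ at the values $\theta=k\pi$). The degenerate configuration you flag at the end, $\sin\theta(s_0)=0$ with $\eta(s_0)=0$, is in fact passed over silently by the paper as well (it writes $u=\sin\theta/\eta$ without comment), so your explicit acknowledgement of it is, if anything, the more careful treatment.
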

	\begin{proof}
		From the partial derivatives of ${{\vec{\varphi }}_{(\alpha, {{q}_{o}})}}(s,u)=\vec{\alpha }(s)+u{{\vec{q}}_{o}}(s)$, we get
		\begin{equation} \label{partialderivativesofvarphi}
			\frac{\partial {{{\vec{\varphi }}}_{(\alpha ,{{q}_{o}})}}}{\partial s}=\cos \theta \,{{\vec{q}}_{o}}+u\mu \vec{B}+(\sin \theta -u\eta )\vec{r}, \hspace{8pt} \frac{\partial {{{\vec{\varphi }}}_{(\alpha ,{{q}_{o}})}}}{\partial u}={{\vec{q}}_{o}}.
		\end{equation}
		Therefore, the direction of surface normal is given by the vector
		\begin{equation*}
			\frac{\partial {{{\vec{\varphi }}}_{(\alpha ,{{q}_{o}})}}}{\partial s}\times \frac{\partial {{{\vec{\varphi }}}_{(\alpha ,{{q}_{o}})}}}{\partial u}=(\sin \theta -u\eta )\vec{B}-u\mu  \vec{r}.
		\end{equation*}
		Then, the OT-ruled surface $\varphi_{(\alpha, q_o)}$ has singular points if and only if the system
		\begin{equation} \label{SingularSystem}
			\begin{cases}
				\sin \theta -u\eta = 0,\\
				u \mu = 0
			\end{cases}
		\end{equation}
		holds. Let now assume that $u=0$. Then, from the first equality, it follows $\theta ({{s}_{0}})=k\pi, (k \in \mathbb{Z}, {{s}_{0}} \in I)$. When this satisfies for all $s \in I$, we have ${{\varphi }_{(\alpha, {{q}_{o}})}}={{\varphi }_{(\alpha ,T)}}$ and the locus of the singular points is the base curve $\alpha$. If $u\ne 0$, from the system (\ref{SingularSystem}), we get $u(s)=\frac{\sin \theta }{\eta }$ and $\mu =0$. Since we assume that singular points exist, from Theorem \ref{PlaneOT}, we have $\tau \ne 0$. Otherwise, the surface is a plane and regular. Then, $\mu=0$ implies that $\sin \theta =0$ which is a contradiction with the assumption that $u \ne 0$. And so, the system (\ref{SingularSystem}) only holds if and only if $u=0$, $\theta ({{s}_{0}})=k\pi, (k\in \mathbb{Z}, {{s}_{0}}\in I)$.
	\end{proof}
	
	Hereafter, for the sake of simplicity, we will take $f=\sin \theta -u\eta $ and $g=u\mu $.
	
	\begin{proposition} \label{developable}
		The OT-ruled surface $\varphi_{(\alpha, q_o)}$ is developable if and only if $\varphi_{(\alpha, q_o)}$ is a plane or ${{\varphi }_{(\alpha, {{q}_{o}})}}={{\varphi }_{(\alpha ,T)}}$.
	\end{proposition}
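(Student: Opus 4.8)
The plan is to invoke the developability criterion recorded in the Preliminaries, namely that $\varphi_{(\alpha, q_o)}$ is developable if and only if $\det(\vec{\alpha}', \vec{q}_o, \vec{q}_o') = 0$ holds along the base curve. Since $\alpha$ is arc-length parametrized we have $\vec{\alpha}' = \vec{T}$, and I would express everything in the orthonormal frame $\left\{ \vec{q}_o, \vec{B}, \vec{r} \right\}$ using the two ingredients already available: the relation $\vec{T} = \cos\theta\,\vec{q}_o + \sin\theta\,\vec{r}$ and the derivative formula $\vec{q}_o' = \mu\vec{B} - \eta\vec{r}$. Writing the triple product as $\vec{T} \cdot (\vec{q}_o \times \vec{q}_o')$ and using $\vec{q}_o \times \vec{B} = \vec{r}$, $\vec{q}_o \times \vec{r} = -\vec{B}$, one finds $\vec{q}_o \times \vec{q}_o' = \eta\vec{B} + \mu\vec{r}$, and pairing this with $\vec{T}$ collapses to the single scalar $\det(\vec{\alpha}', \vec{q}_o, \vec{q}_o') = \mu\sin\theta = \tau\sin^2\theta$.

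As a cross-check consistent with the paper's emphasis on the fundamental forms, I would also note that every ruled surface has $N = 0$ in its second fundamental form (since $\partial^2\vec{\varphi}_{(\alpha,q_o)}/\partial u^2 = 0$), so $K = -M^2/(EG - F^2)$ and developability is equivalent to $M = 0$. Using $\partial^2\vec{\varphi}_{(\alpha,q_o)}/\partial s\partial u = \vec{q}_o'$ together with the unnormalized surface normal $(\sin\theta - u\eta)\vec{B} - u\mu\vec{r}$ computed in the preceding proof gives $M = \mu\sin\theta / \sqrt{f^2 + g^2}$, so the vanishing of $K$ is again controlled by the same factor $\mu\sin\theta = \tau\sin^2\theta$.

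With developability reduced to the condition $\tau\sin^2\theta \equiv 0$, the remaining step is to read off the two geometric alternatives of the statement. The factor $\tau = 0$ says $\alpha$ is a plane curve, which by Theorem \ref{PlaneOT} is equivalent to $\varphi_{(\alpha, q_o)}$ being a plane. The factor $\sin\theta = 0$ says $\theta = k\pi$ with $k \in \mathbb{Z}$, i.e. $\vec{q}_o = \pm\vec{T}$, which is precisely the condition $\varphi_{(\alpha, q_o)} = \varphi_{(\alpha, T)}$ identified just after Definition \ref{OT-ruledSurfaceDefinition}. The converse is immediate: each of the two cases forces $\tau\sin^2\theta = 0$ and hence $\det(\vec{\alpha}', \vec{q}_o, \vec{q}_o') = 0$, so the equivalence is complete.

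The work here is not analytic but organizational, and I expect two small points to require care. First, the orientation and sign conventions of the moving frame must be tracked correctly in the cross products, since a misplaced sign would alter the final factor; fixing $\vec{q}_o \times \vec{B} = \vec{r}$ (equivalently $\vec{r} = \vec{q}_o \times \vec{B}$ as in \eqref{r-vector}) resolves this. Second, one should be explicit that developability demands $\tau\sin^2\theta$ vanish identically on $I$ rather than at isolated parameter values, so that the conclusion lands genuinely in one of the two pure cases $\tau \equiv 0$ or $\sin\theta \equiv 0$ — matching the clean dichotomy asserted in the proposition — and the degenerate reading in which $\alpha$ is a straight line is excluded by Remark \ref{remark}.
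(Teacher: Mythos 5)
Your proposal is correct and follows essentially the same route as the paper: the paper's proof consists precisely of computing $\det(\vec{\alpha}', \vec{q}_o, \vec{q}\hspace{2pt}'_{\hspace{-2pt}o}) = \mu\sin\theta$ and invoking Theorem \ref{PlaneOT} to identify the factor $\tau = 0$ with the plane case, with $\sin\theta = 0$ giving the tangent surface. Your frame computation fills in the determinant evaluation the paper leaves implicit, and the cross-check via $K = -M^2/(EG-F^2)$ is a nice consistency observation but not a different argument.
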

	\begin{proof}
		For the surface $\varphi_{(\alpha, q_o)}$, we have $\det (\vec{\alpha }', \vec{q}_o, \vec{q}\hspace{2pt}'_{\hspace{-2pt} o}) = \mu \sin \theta$. Considering Theorem \ref{PlaneOT}, we have the desired result.
	\end{proof}
	
	\begin{proposition} \label{cylindrical}
		Among all OT-ruled surfaces $\varphi_{(\alpha, q_o)}$, only the plane is cylindrical.
	\end{proposition}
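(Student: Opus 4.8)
The plan is to read the cylindrical condition directly off the structure equations for the frame $\{\vec{q}_o,\vec{B},\vec{r}\}$ and then reduce to the developability dichotomy already established. Recall that $\varphi_{(\alpha,q_o)}$ is cylindrical precisely when $\vec{q}_o'=\vec{0}$. From the derivative formulae the first row gives $\vec{q}_o'=\mu\vec{B}-\eta\vec{r}$, and since $\{\vec{q}_o,\vec{B},\vec{r}\}$ is orthonormal this vanishes if and only if $\mu=0$ and $\eta=0$ simultaneously. So the whole proposition comes down to analysing these two scalar equations together with the conventions fixed in Remark~\ref{remark}.

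I would then exploit the developability result rather than argue from scratch. Any cylindrical ruled surface is automatically developable: with $\vec{q}_o'=\vec{0}$ the determinant $\det(\vec{\alpha}',\vec{q}_o,\vec{q}_o')$ has a zero column and hence vanishes identically (this also matches the formula $\det(\vec{\alpha}',\vec{q}_o,\vec{q}_o')=\mu\sin\theta$ used in Proposition~\ref{developable}, since $\mu=0$ here). By Proposition~\ref{developable} a developable OT-surface is either a plane or the tangent surface $\varphi_{(\alpha,T)}$, so it suffices to exclude the tangent case. On $\varphi_{(\alpha,T)}$ one has $\theta\equiv k\pi$, hence $\theta'=0$ and $\eta=\theta'+\kappa=\kappa$; the cylindrical condition $\eta=0$ would then force $\kappa=0$, contradicting the standing assumption $\kappa\neq0$ for tangent surfaces recorded in Remark~\ref{remark}. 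Therefore a cylindrical OT-surface can only be a plane, which is the assertion.

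As a sanity check and an alternative to the developability shortcut, I would also verify the conclusion from the scalar equations directly. The condition $\mu=\tau\sin\theta=0$ alone already yields that the surface is a plane whenever $\varphi_{(\alpha,q_o)}\neq\varphi_{(\alpha,T)}$ and $\varphi_{(\alpha,q_o)}\neq\varphi_{(\alpha,N)}$, by the equivalence $(iii)\Leftrightarrow(ii)$ of Theorem~\ref{notTN}; the two excluded rulings $\vec{q}_o=\pm\vec{T}$ and $\vec{q}_o=\pm\vec{N}$ are precisely the configurations where $\theta$ is constant, so there $\eta=\kappa$ and the requirement $\eta=0$ collapses the base curve to a straight line, which is not admissible.

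I expect the only real obstacle to be organisational rather than computational: the algebra reduces to $\mu=\eta=0$ in a single line, but one must be careful that the two degenerate rulings $\pm\vec{T}$ and $\pm\vec{N}$, where Theorem~\ref{notTN} does not apply, do not secretly supply a non-planar cylindrical example. Checking that $\eta=0$ is incompatible with $\kappa\neq0$ in exactly those cases is what closes the gap, and it is there that the convention in Remark~\ref{remark} is genuinely needed.
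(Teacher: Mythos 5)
Your argument is correct and no step in it fails; it reaches the paper's conclusion by a mildly different route. Both proofs share the same skeleton: cylindricality means $\vec{q}_o' = 0$, which (whether expanded in the Frenet frame of $\alpha$, as the paper does in equation (\ref{diffq_o}), or in the frame $\{\vec{q}_o,\vec{B},\vec{r}\}$, as you do) amounts to the two scalar conditions $\eta = 0$ and $\mu = \tau\sin\theta = 0$; and both exclude the tangent-surface case by the identical appeal to Remark~\ref{remark} ($\theta'=0$ there, so $\eta=\kappa\neq 0$ contradicts $\eta=0$). The difference is the final step: in the non-tangent case the paper deduces $\tau=0$, $\eta=0$ and invokes Theorem~\ref{PlaneOT} directly, which has the side benefit of producing the explicit angle $\theta=-\int_0^s \kappa\, ds$ reused in the Corollary that follows, whereas you pass from $\mu=0$ to developability and invoke the dichotomy of Proposition~\ref{developable}; this is a legitimate, non-circular reuse of an earlier result (Proposition~\ref{developable} precedes Proposition~\ref{cylindrical} and does not depend on it), at the cost of one extra layer of indirection, since Proposition~\ref{developable} itself rests on Theorem~\ref{PlaneOT}. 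One small inaccuracy in your supplementary check: $\varphi_{(\alpha,T)}$ and $\varphi_{(\alpha,N)}$ are not ``precisely the configurations where $\theta$ is constant'' --- $\theta\equiv\pi/4$, say, is constant and gives neither --- but this is harmless, because every other constant-$\theta$ surface falls under the case already handled by Theorem~\ref{notTN}.
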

	\begin{proof}
		Since a ruled surface is called cylindrical if and only if the direction of the ruling is a constant vector, we get $\vec{q}\hspace{2pt}'_{\hspace{-2pt} o} = 0$ if and only if
		\begin{equation} \label{diffq_o}
			-\eta \sin \theta \vec{T} + \eta \cos \theta \vec{N} + \tau \sin \theta \vec{B} = 0.
		\end{equation}
		If ${{\varphi }_{(\alpha, {{q}_{o}})}}={{\varphi }_{(\alpha, T)}}$, then $\theta (s)=k \pi$ for all $s\in I$ and (\ref{diffq_o}) gives $\eta =0$, which implies that $\kappa =0$, which is a contradiction with Remark \ref{remark}. If ${{\varphi }_{(\alpha ,{{q}_{o}})}}\ne {{\varphi }_{(\alpha ,T)}}$, then from (\ref{diffq_o}) we have $\tau =0$, $\eta =0$ which gives $\theta (s)=-\int_{0}^{s}{\kappa (s)ds}$ and Theorem \ref{PlaneOT} gives that $\varphi_{(\alpha, q_o)}$ is a plane.
	\end{proof}
	
	Proposition \ref{developable} and Proposition \ref{cylindrical} give the following corollary:
	
	\begin{corollary}
		If the OT-ruled surface $\varphi_{(\alpha, q_o)}$ is cylindrical, then it is a plane with the parametric form
		\begin{equation*}
			{{\vec{\varphi }}_{(\alpha ,{{q}_{o}})}}(s,u)=\vec{\alpha }(s)+u\left( \cos \left( \int_{0}^{s}{\kappa (s)ds} \right)\vec{T}(s)-\sin \left( \int_{0}^{s}{\kappa (s)ds} \right)\vec{N}(s) \right)
		\end{equation*}
	\end{corollary}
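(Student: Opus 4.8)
The plan is to treat this corollary as a direct consequence of Proposition \ref{cylindrical}, whose proof already isolates all the information we need; the only work remaining is to make the director vector explicit in the degenerate situation it describes. First I would invoke Proposition \ref{cylindrical} to conclude that a cylindrical OT-ruled surface must be a plane, so the claim reduces to computing the director curve $\vec{q}_o$ precisely when the surface degenerates in this way.

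Next I would revisit the two cases analysed in the proof of Proposition \ref{cylindrical}. The case ${{\varphi }_{(\alpha, {{q}_{o}})}}={{\varphi }_{(\alpha, T)}}$ was excluded there, since it forces $\kappa = 0$ in contradiction with Remark \ref{remark}; hence we are necessarily in the case ${{\varphi }_{(\alpha, {{q}_{o}})}}\ne {{\varphi }_{(\alpha, T)}}$. Equation (\ref{diffq_o}) then yields $\tau = 0$ and $\eta = 0$. Since $\eta = \theta' + \kappa$, the vanishing of $\eta$ gives the first-order relation $\theta'(s) = -\kappa(s)$, which is the germ of the stated formula.

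Integrating this relation is the key step. Fixing the integration constant by the normalization $\theta(0) = 0$ (the angle function is determined only up to such an additive constant, and this choice is the one matching the asserted formula), I obtain $\theta(s) = -\int_0^s \kappa(s)\,ds$. Substituting into $\vec{q}_o = \cos\theta\,\vec{T} + \sin\theta\,\vec{N}$ and using the parity of the trigonometric functions, namely $\cos\theta = \cos\left(\int_0^s \kappa\,ds\right)$ and $\sin\theta = -\sin\left(\int_0^s \kappa\,ds\right)$, immediately reproduces the parametric form in the statement.

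I expect no serious obstacle here: the geometric content has already been extracted in Proposition \ref{cylindrical}, and what remains is routine. The only point demanding a little care is the bookkeeping of the integration constant, since the compact expression in the statement tacitly normalizes $\theta(0) = 0$; any other choice would alter $\vec{q}_o$ by a constant rotation within the osculating plane and merely reparametrize the same plane, so the conclusion is unaffected.
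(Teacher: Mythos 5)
Your proposal is correct and matches the paper's reasoning: the paper presents this corollary as an immediate consequence of Proposition \ref{cylindrical}, whose proof already establishes $\tau=0$, $\eta=0$ and hence $\theta(s)=-\int_{0}^{s}\kappa(s)\,ds$ in the only surviving case, exactly as you argue. Your explicit substitution of $\theta$ into $\vec{q}_o=\cos\theta\,\vec{T}+\sin\theta\,\vec{N}$ and your remark on the normalization $\theta(0)=0$ simply fill in details the paper leaves implicit.
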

	
	\begin{proposition} \label{strictionlineprop}
		The base curve $\alpha$ of the OT-ruled surface $\varphi_{(\alpha, q_o)}$ is also its striction line if and only if $\theta (s)=-\int_{0}^{s}{\kappa (s)ds}$ or ${{\varphi }_{(\alpha ,{{q}_{o}})}}={{\varphi }_{(\alpha ,T)}}$.
	\end{proposition}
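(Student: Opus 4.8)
The plan is to reduce the condition ``$\alpha$ is its own striction line'' to a single scalar equation via the striction formula (\ref{strictionline}), and then read off the two cases from the structure equations of the frame $\left\{ \vec{q}_o, \vec{B}, \vec{r} \right\}$. First I would observe that the base curve $\alpha$ coincides with the striction line exactly when the coefficient $\frac{\langle \vec{\alpha}', \vec{q}_o' \rangle}{\langle \vec{q}_o', \vec{q}_o' \rangle}$ appearing in (\ref{strictionline}) vanishes, equivalently when $\langle \vec{\alpha}', \vec{q}_o' \rangle = 0$; the denominator is nonzero precisely in the non-cylindrical case, which is the generic situation here.

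Next I would compute this inner product in the moving frame. Using $\vec{\alpha}' = \vec{T} = \cos\theta\,\vec{q}_o + \sin\theta\,\vec{r}$ together with the derivative formula $\vec{q}_o' = \mu\vec{B} - \eta\vec{r}$ read off from the structure matrix, orthonormality of $\left\{ \vec{q}_o, \vec{B}, \vec{r} \right\}$ immediately yields $\langle \vec{\alpha}', \vec{q}_o' \rangle = -\eta\sin\theta$. Hence the base curve is the striction line if and only if $\eta\sin\theta = 0$. Then I would interpret the two factors: vanishing of $\eta = \theta' + \kappa$ integrates to $\theta(s) = -\int_0^s \kappa(s)\,ds$, while $\sin\theta = 0$ forces $\theta \equiv k\pi$, i.e. $\vec{q}_o = \pm\vec{T}$, which by Definition \ref{OT-ruledSurfaceDefinition} is exactly the case $\varphi_{(\alpha, q_o)} = \varphi_{(\alpha, T)}$. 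These are precisely the two alternatives in the statement, and the converse is read off the same identity.

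The main point requiring care is the degenerate (cylindrical) situation, where $\vec{q}_o' = 0$ and the denominator in (\ref{strictionline}) vanishes so that the striction line is not directly defined. By Proposition \ref{cylindrical} this occurs only for the plane, and the corollary following it already records that in that case $\theta(s) = -\int_0^s \kappa(s)\,ds$; thus the cylindrical case is consistent with the first alternative and can be absorbed into it. A secondary subtlety I would flag is that the pointwise equation $\eta\sin\theta = 0$ need not split globally into ``$\eta \equiv 0$'' or ``$\sin\theta \equiv 0$'': on each subinterval one factor vanishes, and since the characterization is meant to hold on all of $I$, I would argue by continuity that the two displayed global conditions exhaust the relevant possibilities, the remaining patched configurations being degenerate.
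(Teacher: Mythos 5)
Your proposal is correct and follows essentially the same route as the paper: reduce the striction-line condition to $\langle \vec{\alpha}', \vec{q}\hspace{2pt}'_{\hspace{-2pt} o} \rangle = 0$, compute this inner product in the frame $\{\vec{q}_o,\vec{B},\vec{r}\}$ to get $-\eta\sin\theta$, and read off the two alternatives $\eta=0$ and $\sin\theta=0$. Your additional remarks on the cylindrical degenerate case and on the fact that $\eta\sin\theta\equiv 0$ does not automatically split into the two global conditions are points the paper silently glosses over, and they only strengthen the argument.
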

	\begin{proof}
		The base curve $\alpha$ is the striction line of $\varphi_{(\alpha, q_o)}$ if and only if $\langle \vec{\alpha}', \vec{q}\hspace{2pt}'_{\hspace{-2pt} o} \rangle = 0$. Therefore, we get $\langle \vec{\alpha}', \vec{q}\hspace{2pt}'_{\hspace{-2pt} o} \rangle = - \eta \sin \theta$ which gives the desired result.
	\end{proof}
	
	From Proposition \ref{strictionlineprop}, it is clear that the set of the intersection points of base curve $\alpha$ and striction curve $c$ is $V = S \cup Y$, where $S$ is the set of singular points of $\varphi_{(\alpha, q_o)}$ and
	\begin{equation*}
		Y=\left\{ ({{s}_{0}},{{u}_{0}}) \in I \times \mathbb{R}: {\theta }'({{s}_{0}})=-\kappa ({{s}_{0}}), {{u}_{0}}=0 \right\}.
	\end{equation*}
	It is clear that the points of $Y$ are non-singular.
	
	Let now investigate the special curves lying on the OT-surface $\varphi_{(\alpha, q_o)}$. The Gauss map (or the unit surface normal) of the OT-ruled surface $\varphi_{(\alpha, q_o)}$ is given by
	\begin{equation} \label{GaussMap}
		\vec{U}(s,u)=\frac{1}{\sqrt{{{f}^{2}}+{{g}^{2}}}}\left( f\vec{B}-g\,\vec{r} \right).
	\end{equation}
	Then, for the base curve $\alpha$ we have the followings:
	
	\begin{theorem}
		The base curve $\alpha$ is a geodesic on the OT-ruled surface $\varphi_{(\alpha, q_o)}$ if and only if $\alpha$ is a straight line.
	\end{theorem}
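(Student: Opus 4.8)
The plan is to use the standard characterization that a curve parametrized by arc length lying on a surface is a geodesic precisely when its geodesic curvature vanishes, equivalently when its acceleration vector has no tangential component. Since the base curve corresponds to the parameter value $u=0$, the first step is to evaluate the Gauss map (\ref{GaussMap}) there. At $u=0$ we have $f=\sin \theta$ and $g=0$, so (\ref{GaussMap}) collapses to $\vec{U}(s,0)=\pm \vec{B}$, with the sign depending on that of $\sin \theta$. This expression is well defined exactly at the regular points of $\alpha$, that is, where $\sin \theta \neq 0$.

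Next I would compute the acceleration of $\alpha$. Because $s$ is the arc-length parameter, the Frenet formulas give $\vec{\alpha}\,''=\kappa \vec{N}$. To read off the geodesic curvature I would form the in-surface normal to $\alpha$, namely $\vec{U}(s,0)\times \vec{T}=\pm \vec{B}\times \vec{T}=\pm \vec{N}$, and then the geodesic curvature is
$\kappa_g=\langle \vec{\alpha}\,'',\,\vec{U}(s,0)\times \vec{T}\rangle=\pm \langle \kappa \vec{N},\vec{N}\rangle=\pm \kappa$.
The key structural fact here is the orthogonality $\vec{N}\perp \vec{B}$ built into the Frenet frame: it forces the whole of $\vec{\alpha}\,''$ to lie in the tangent plane of the surface along $\alpha$, so the tangential component equals the full magnitude $\kappa$.

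The conclusion is then immediate in both directions. For the ``only if'' part, if $\alpha$ is a geodesic then $\kappa_g=0$ forces $\kappa=0$, so $\alpha$ is a straight line. For the ``if'' part, a straight line has $\vec{\alpha}\,''=0$, hence $\kappa_g=0$, so $\alpha$ is a geodesic; indeed a straight line is a geodesic of every surface on which it lies.

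The only delicate point, and the place where I would take care, is the interplay with the singular set. At the points where $\theta(s_0)=k\pi$ the pair $(s_0,0)$ is singular by the preceding theorem and $\vec{U}$ degenerates, so the computation of $\kappa_g$ as $\pm \kappa$ is valid only at the regular points of $\alpha$. I would note that this is enough, since the geodesic property is tested at regular points and the orthogonality argument leaves no room for the tangential acceleration to vanish there unless $\kappa \equiv 0$, which returns us to the straight-line case where the surface is anyway excluded from consideration by Remark \ref{remark}.
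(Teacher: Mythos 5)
Your computation at the regular points of the base curve is correct and is in substance the same as the paper's: at $u=0$ one has $f=\sin\theta$, $g=0$, so (\ref{GaussMap}) gives $\vec{U}(s,0)=\pm\vec{B}$, and since $\vec{\alpha}''=\kappa\vec{N}$ lies in the tangent plane along $\alpha$, the geodesic condition forces $\kappa=0$ wherever $\sin\theta\neq 0$. Your criterion $\kappa_g=\langle\vec{\alpha}'',\vec{U}\times\vec{T}\rangle=\pm\kappa$ and the paper's condition $\vec{U}\times\vec{\alpha}''=0$ are the same test in equivalent forms.

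The genuine gap is the case $\varphi_{(\alpha,q_o)}=\varphi_{(\alpha,T)}$, i.e.\ $\theta(s)=k\pi$ for all $s$, to which the paper devotes the entire second half of its proof and which your argument cannot reach. In that case every point of the base curve is singular, so there are no regular points of $\alpha$ at which your test can be applied: it is passed vacuously, yet by Remark \ref{remark} the base curve of $\varphi_{(\alpha,T)}$ has $\kappa\neq 0$ and is not a straight line, so the ``only if'' direction collapses unless this case is disposed of separately. Your closing sentence appeals to Remark \ref{remark}, but misreads it: the remark does not exclude the surface $\varphi_{(\alpha,T)}$ from consideration; it excludes straight lines as base curves \emph{of that surface}, which is precisely why the case needs an argument. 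The paper settles it by keeping the $u$-dependence in $f=\sin\theta-u\eta$ (in effect using the normal along nearby points of the rulings, where $f=-u\eta$, $g=0$, $\vec{U}=\mp\vec{B}$): the condition $\kappa f=0$ then forces $\kappa\eta=0$, and for the tangent surface $\eta=\theta'+\kappa=\kappa$, so $\kappa=0$, contradicting Remark \ref{remark}; hence $\alpha$ is not a geodesic there. A secondary point: even when $\varphi_{(\alpha,q_o)}\neq\varphi_{(\alpha,T)}$, your argument yields $\kappa=0$ only on the set $\{s:\sin\theta(s)\neq 0\}$; to conclude $\kappa\equiv 0$ on $I$ you need a continuity or density remark (immediate if the zeros of $\sin\theta$ are isolated, but if $\sin\theta$ vanished on a whole subinterval the same singular-curve issue would recur there). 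The paper is equally brisk on that last point, but the tangent-surface case must be closed explicitly for the proof to be complete.
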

	\begin{proof}
		We know that $\alpha$ is a geodesic on $\varphi_{(\alpha, q_o)}$ if and only if the condition
		\begin{equation} \label{geodesiccondition}
			\vec{U} \times {\vec{\alpha }}'' = 0
		\end{equation}
		satisfies. Then, by using (\ref{GaussMap}), from (\ref{geodesiccondition}) we get
		\begin{equation}
			\vec{U}\times {\vec{\alpha }}''=\frac{1}{\sqrt{{{f}^{2}}+{{g}^{2}}}}\left( -\kappa f\,\vec{T}-g\kappa \sin \theta \,\vec{B} \right)
		\end{equation}
		and that $\alpha$ is a geodesic curve on $\varphi_{(\alpha, q_o)}$ if and only if the system
		\begin{equation*}
			\begin{cases}
				\kappa f = 0\\
				g \kappa \sin \theta = 0
			\end{cases}
		\end{equation*}
		holds. If we assume ${{\varphi }_{(\alpha ,{{q}_{o}})}}\ne {{\varphi }_{(\alpha ,T)}}$, from the last system it follows
		\begin{equation*}
			\kappa f=0,\hspace{4pt} g\kappa =0,
		\end{equation*}
		which gives that $\kappa=0$, i.e., $\alpha$ is a straight line or the system
		\begin{equation*}
			f=0, \hspace{4pt} g=0,
		\end{equation*}
		holds. But for the last system, considering (\ref{SingularSystem}), it follows that the system has a solution as a curve if and only if ${{\varphi }_{(\alpha ,{{q}_{o}})}}={{\varphi }_{(\alpha ,T)}}$ which is a contradiction by the assumption ${{\varphi }_{(\alpha ,{{q}_{o}})}}\ne {{\varphi }_{(\alpha ,T)}}$ and so, we eliminate this case. If ${{\varphi }_{(\alpha ,{{q}_{o}})}}={{\varphi }_{(\alpha ,T)}}$, considering Remark \ref{remark}, we should take $\kappa \ne 0$. But for this case, the system gives that $\eta =\kappa =0$, which is a contradiction. Then we have that $\alpha$ is a geodesic on $\varphi_{(\alpha, q_o)}$ if and only if $\alpha$ is a straight line.
	\end{proof}
	
	\begin{theorem}
		Let $\alpha$ have non-vanishing curvature $\kappa$. Then, $\alpha$ is an asymptotic curve on the OT-ruled surface $\varphi_{(\alpha, q_o)}$ if and only if one of the followings hold:
		\begin{equation*}
			(i) \hspace{4pt} \varphi_{(\alpha, q_o)} \hspace{4pt} \textit{is a plane} \hspace{12pt} (ii) \hspace{4pt} {{\varphi }_{(\alpha ,{{q}_{o}})}}={{\varphi }_{(\alpha ,T)}} \hspace{12pt} (iii) \hspace{4pt} {{\varphi }_{(\alpha ,{{q}_{o}})}}={{\varphi }_{(\alpha ,N)}}.
		\end{equation*}
	\end{theorem}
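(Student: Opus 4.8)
The plan is to apply the standard criterion that a curve on a surface is asymptotic precisely when the normal component of its acceleration vanishes, that is, $\langle \vec{\alpha}\hspace{2pt}'', \vec{U} \rangle = 0$ along the base curve $\alpha$. All the needed ingredients are already in hand. From the Frenet equations one has $\vec{\alpha}\hspace{2pt}'' = \kappa \vec{N}$, and from the frame relations recorded just after (\ref{r-vector}) the principal normal may be rewritten in the moving frame as $\vec{N} = \sin\theta\,\vec{q}_o - \cos\theta\,\vec{r}$. Pairing this with the Gauss map (\ref{GaussMap}) is then a short inner-product computation, since $\{\vec{q}_o, \vec{B}, \vec{r}\}$ is orthonormal.

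Carrying out that pairing, every term drops except the $\vec{r}$--$\vec{r}$ contribution, so that
\begin{equation*}
	\langle \vec{\alpha}\hspace{2pt}'', \vec{U} \rangle = \frac{\kappa\, g \cos\theta}{\sqrt{f^2 + g^2}} = \frac{\kappa\, u\,\mu \cos\theta}{\sqrt{f^2 + g^2}},
\end{equation*}
using $g = u\mu$. Under the hypothesis $\kappa \neq 0$, the asymptotic condition therefore collapses to $\mu\cos\theta = 0$, which by $\mu = \tau\sin\theta$ is the single scalar equation $\tau\sin\theta\cos\theta = 0$. The core of the proof is thus reduced to reading off the geometric meaning of the three ways this product can vanish, which we organize as $\mu = 0$ or $\cos\theta = 0$.

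The final step is the case analysis. If $\mu = 0$ then either $\tau = 0$, in which case $\alpha$ is a plane curve and Theorem \ref{PlaneOT} makes $\varphi_{(\alpha, q_o)}$ a plane, giving (i); or $\sin\theta = 0$, i.e. $\theta = k\pi$, in which case $\vec{q}_o = \pm\vec{T}$ and the surface is the tangent surface $\varphi_{(\alpha, T)}$, giving (ii). If instead $\cos\theta = 0$, i.e. $\theta = \pi/2 + k\pi$, then $\vec{q}_o = \pm\vec{N}$ and the surface is the principal-normal surface $\varphi_{(\alpha, N)}$, giving (iii); and each of (i)--(iii) conversely forces one factor to vanish, completing the equivalence. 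The main obstacle I anticipate is the bookkeeping around the factor $g = u\mu$: one must ensure that the vanishing of $\langle \vec{\alpha}\hspace{2pt}'', \vec{U} \rangle$ is attributed to the surface invariant $\mu\cos\theta$ rather than merely to the ruling parameter $u$, and then disentangle the overlapping alternatives $\tau = 0$, $\sin\theta = 0$, $\cos\theta = 0$ (noting that $\mu = 0$ already subsumes the first two in the sense of Theorem \ref{notTN}) so that each is matched cleanly with the stated geometric case.
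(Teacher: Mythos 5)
Your proposal is correct and takes essentially the same route as the paper: the paper's proof computes exactly the same quantity, $\left\langle \vec{U},{\vec{\alpha }}'' \right\rangle = \frac{u\kappa \tau \cos \theta \sin \theta}{\sqrt{f^2+g^2}}$ (its equation (\ref{asymptoticcondition})), and reads off the alternatives $\tau=0$, $\sin\theta=0$, $\cos\theta=0$, which are cases (i)--(iii) via Theorem \ref{PlaneOT}. The only difference is one of explicitness: you spell out the frame computation, the case analysis, and the handling of the factor $u$ (i.e., attributing the vanishing to the invariant $\mu\cos\theta$ rather than to $u$), all of which the paper passes over silently.
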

	\begin{proof}
		 $\alpha$ is an asymptotic curve on $\varphi_{(\alpha, q_o)}$ if and only if $\langle \vec{U},{\vec{\alpha }}'' \rangle =0.$ Then, we get
		 \begin{equation} \label{asymptoticcondition}
			 \left\langle \vec{U},{\vec{\alpha }}'' \right\rangle =\frac{u\kappa \tau \cos \theta \sin \theta }{\sqrt{{{f}^{2}}+{{g}^{2}}}}
		 \end{equation}
		 From (\ref{asymptoticcondition}), we obtain that $\langle \vec{U},{\vec{\alpha }}'' \rangle = 0$ if and only if $\tau=0$ or $\sin \theta = 0$ or $\cos \theta = 0$.
	\end{proof}
	
	\begin{theorem}
		The base curve $\alpha$ is a line of curvature on the OT-ruled surface $\varphi_{(\alpha, q_o)}$ if and only if $\varphi_{(\alpha, q_o)}$ is a plane.
	\end{theorem}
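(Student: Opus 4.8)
The plan is to use Rodrigues' characterization of lines of curvature: the base curve $\alpha$ (the parameter curve $u=0$) is a line of curvature if and only if the derivative of the unit normal along it is tangent to it, i.e. $\vec{U}' = \lambda\,\vec{\alpha}'$ for some scalar function $\lambda$, equivalently $\vec{U}'\times\vec{\alpha}'=0$ along $\alpha$. Everything is to be evaluated at $u=0$, where $\vec{\alpha}'=\vec{T}$.

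First I would restrict the Gauss map (\ref{GaussMap}) to the base curve. Setting $u=0$ gives $f=\sin\theta$ and $g=0$, so $\vec{U}(s,0)=\dfrac{\sin\theta}{\sqrt{\sin^2\theta}}\,\vec{B}=\varepsilon\,\vec{B}$, where $\varepsilon=\operatorname{sgn}(\sin\theta)$ is locally constant on the set where $\sin\theta\neq 0$. Differentiating with the Frenet formula $\vec{B}'=-\tau\vec{N}$ yields $\vec{U}'=-\varepsilon\tau\vec{N}$, whence $\vec{U}'\times\vec{\alpha}'=-\varepsilon\tau\,(\vec{N}\times\vec{T})=\varepsilon\tau\,\vec{B}$. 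This vanishes precisely when $\tau=0$, and by Theorem \ref{PlaneOT} the condition $\tau=0$ is equivalent to $\varphi_{(\alpha,q_o)}$ being a plane. Conversely, on a plane the shape operator is identically zero, so every curve---$\alpha$ in particular---is trivially a line of curvature, which closes the equivalence.

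The one point needing care, and the main obstacle, is the degenerate locus $\sin\theta=0$. The reduction $\vec{U}=\varepsilon\vec{B}$ is valid only where $\sin\theta\neq 0$; but by the singular-point theorem the points of the base curve with $\sin\theta=0$ (i.e. $\theta=k\pi$, the tangent-surface situation $\varphi_{(\alpha,q_o)}=\varphi_{(\alpha,T)}$) are exactly the singular points, at which $\vec{U}$ is undefined and the notion of line of curvature is vacuous. Hence a genuine line of curvature must lie in the regular locus, forcing $\sin\theta\neq 0$ throughout; this is what rules out the spurious branch in the equivalent determinant criterion $EM-FL=\tau\lvert\sin\theta\rvert=0$ and leaves $\tau=0$ as the only possibility. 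Once this is settled the computation above is routine, and the theorem follows from Theorem \ref{PlaneOT}.
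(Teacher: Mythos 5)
Your proof is correct and takes essentially the same route as the paper's: both restrict the Gauss map to the base curve to get $\vec{U}_{\alpha}=\pm\vec{B}$, apply the Rodrigues criterion $\vec{U}_{\alpha}'\times\vec{\alpha}'=0$ using the Frenet formula $\vec{B}'=-\tau\vec{N}$, and conclude $\tau=0$, hence planarity, via Theorem \ref{PlaneOT}. Your extra care with the sign $\varepsilon$ and with the singular locus $\sin\theta=0$ (where the paper simply writes $\vec{U}_{\alpha}=\vec{B}$ without comment) only tightens a point the paper leaves implicit.
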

	\begin{proof}
		The curve $\alpha$ is a line of curvature on the OT-ruled surface $\varphi_{(\alpha, q_o)}$ if and only if $\vec{U}_\alpha' \times \vec{\alpha}' = 0$ holds where $\vec{U}_\alpha$ is the unit surface normal along the curve $\alpha$ and for which we have ${{\vec{U}}_{\alpha }}=\vec{B}$. Then, it follows
		\begin{equation} \label{lineofcurvaturecondition}
			{{{\vec{U}}'}_{\alpha }}\times {\vec{\alpha }}'=-\tau \vec{B}
		\end{equation}
		The equation (\ref{lineofcurvaturecondition}) is equal to zero if and only if $\tau = 0$ and from Theorem \ref{PlaneOT}, we have that $\varphi_{(\alpha, q_o)}$ is a plane.
	\end{proof}
	
	Now, let us examine first and second fundamental coefficients of the OT-ruled surface $\varphi_{(\alpha, q_o)}$. From (\ref{EFGformulas}) and (\ref{LMNformulas}), we get
	\begin{equation} \label{EFG}
		E={{f}^{2}}+{{g}^{2}}+{{\cos }^{2}}\theta, \hspace{4pt} F=\cos \theta, \hspace{4pt} G=1
	\end{equation}
	\begin{equation} \label{LMN}
		L=\frac{-({{f}^{2}}+{{g}^{2}})\xi +\mu \sin \theta \cos \theta -{{g}^{2}}{{\left( \frac{f}{g} \right)}_{\hspace{-2pt}s}} }{\sqrt{{{f}^{2}}+{{g}^{2}}}}, \hspace{4pt} M=\frac{\mu \sin \theta }{\sqrt{{{f}^{2}}+{{g}^{2}}}}, \hspace{4pt} N=0
	\end{equation}
	By using the fundamental coefficients computed in (\ref{EFG}) and (\ref{LMN}), from (\ref{GaussianCurvatureFormula}) and (\ref{MeanCurvatureFormula}) the Gaussian curvature $K$ and the mean curvature $H$ of OT-ruled surface $\varphi_{(\alpha, q_o)}$ are obtained as
	\begin{equation} \label{KandH}
		K=-\frac{{{\mu }^{2}}{{\sin }^{2}}\theta }{{{\left( {{f}^{2}}+{{g}^{2}} \right)}^{2}}}, \hspace{4pt} H=-\frac{({{f}^{2}}+{{g}^{2}})\xi +\mu \sin \theta \cos \theta +{{g}^{2}}{{\left( \frac{f}{g} \right)}_{\hspace{-2pt}s}}}{2{{\left( {{f}^{2}}+{{g}^{2}} \right)}^{3/2}}}
	\end{equation}
	respectively. From (\ref{KandH}), it follows that $K=0$ if and only if $\tau = 0$ or $\sin \theta = 0$. This result coincides with Proposition \ref{developable}.
	
	It is clear that if $\tau = 0$, then $H=0$ and the OT-ruled surface $\varphi_{(\alpha, q_o)}$ is minimal. If $\tau \ne 0$ and $\sin \theta =0$, then from (\ref{KandH}) we get $H=\frac{\tau }{2u\eta }\ne 0$ Therefore, in this case, the tangent surface ${{\varphi }_{(\alpha ,T)}}$ cannot be minimal. Then, followings are obtained:
	
	\begin{theorem}
		\begin{enumerate} [(i)]
			\item The OT-ruled surface $\varphi_{(\alpha, q_o)}$ is minimal if and only if the equality
			\begin{equation*}
				({{f}^{2}}+{{g}^{2}})\xi +\mu \sin \theta \cos \theta +{{g}^{2}}{{\left( \frac{f}{g} \right)}_{\hspace{-2pt}s}}=0
			\end{equation*}
			satisfies.
			\item If $\tau \ne 0$, there is no minimal tangent surface ${{\varphi }_{(\alpha ,T)}}$.
			\item The principal normal surface ${{\varphi }_{(\alpha ,N)}}$ is minimal if and only if $fu{\mu }'-g{{f}_{s}}=0$, where ${{f}_{s}}=\partial f/\partial s$.
		\end{enumerate}
	\end{theorem}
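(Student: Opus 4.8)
The plan is to read all three parts off the mean-curvature formula (\ref{KandH}) that has already been established, so that everything reduces to simple algebra on its numerator. Write $\mathcal{N} := (f^2+g^2)\xi + \mu\sin\theta\cos\theta + g^2(f/g)_s$ for that numerator, so that $H = -\mathcal{N}/\bigl(2(f^2+g^2)^{3/2}\bigr)$. For part (i), I would observe that $f^2+g^2 = 0$ holds exactly at the singular points (the surface-normal direction is $f\vec{B}-g\vec{r}$, which vanishes iff $f=g=0$), so at every regular point the denominator $2(f^2+g^2)^{3/2}$ is strictly positive and never vanishes. Hence $H=0$ if and only if $\mathcal{N}=0$, which is precisely the stated equality; this part is immediate.

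For part (ii), the tangent surface $\varphi_{(\alpha,T)}$ corresponds to $\theta \equiv k\pi$, so that $\sin\theta = 0$, $\cos\theta = \pm 1$, $\theta' = 0$, and therefore $\mu = \tau\sin\theta = 0$, $\xi = \tau\cos\theta = \pm\tau$, and $\eta = \kappa$. Substituting these into $\mathcal{N}$ (with $g=u\mu=0$ and $f=-u\eta$) collapses the formula to $H = \tau/(2u\eta)$, exactly as recorded in the paragraph preceding the theorem. Consequently $\tau \ne 0$ forces $H \ne 0$, so no minimal tangent surface can exist. This part is essentially a restatement of that preceding computation.

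For part (iii), the principal normal surface $\varphi_{(\alpha,N)}$ corresponds to $\theta \equiv \pi/2 + k\pi$, hence $\cos\theta = 0$ and $\xi = \tau\cos\theta = 0$. Plugging $\xi = 0$ and $\cos\theta = 0$ into $\mathcal{N}$ kills its first two terms, leaving only $g^2(f/g)_s$. The single genuinely algebraic step is to rewrite this via the identity $g^2(f/g)_s = f_s g - f g_s$, together with $g = u\mu$ and hence $g_s = u\mu'$ (since $u$ is held fixed under $\partial_s$). This turns the surviving term into $u(\mu f_s - f\mu') = -(fu\mu' - g f_s)$, so that $H=0$ if and only if $fu\mu' - g f_s = 0$, as claimed.

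The only point requiring care, and the step I expect to be the main obstacle, is the handling of the quotient term $g^2(f/g)_s$, because $g = u\mu$ can vanish (indeed it vanishes identically on the tangent surface, where $\mu=0$). Treating this term throughout as the polynomial expression $f_s g - f g_s$ rather than literally as $g^2$ times a derivative of a quotient sidesteps any division by zero and makes both the tangent-surface and principal-normal-surface specializations clean and unambiguous.
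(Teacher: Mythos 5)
Your proposal is correct and follows essentially the same route as the paper: all three parts are read directly off the mean-curvature formula (\ref{KandH}), with (i) being the vanishing of its numerator, (ii) reproducing the computation $H=\tau/(2u\eta)\ne 0$ when $\sin\theta=0$ and $\tau\ne 0$, and (iii) the specialization $\cos\theta=0$. Your extra care in rewriting $g^{2}\left(\frac{f}{g}\right)_{s}$ as $f_{s}g-fg_{s}$ to sidestep division by $g=u\mu=0$ is a small refinement that the paper leaves implicit, but it does not change the argument.
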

	
	Furthermore, considering Catalan Theorem, we have the following corollary:
	
	\begin{corollary}
		If the base curve $\alpha$ is not a plane curve, the OT-ruled surface $\varphi_{(\alpha, q_o)}$ is a helicoid if and only if $({{f}^{2}}+{{g}^{2}})\xi +\mu \sin \theta \cos \theta +{{g}^{2}}{{\left( \frac{f}{g} \right)}_{\hspace{-2pt}s}}=0$ holds.
	\end{corollary}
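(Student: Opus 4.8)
The plan is to deduce this corollary directly from the minimality characterization in the preceding theorem, Theorem \ref{PlaneOT}, and the Catalan Theorem, so essentially no new computation is required. The proof will split into the two implications of the biconditional, with the displayed equation
\begin{equation*}
	({{f}^{2}}+{{g}^{2}})\xi +\mu \sin \theta \cos \theta +{{g}^{2}}{{\left( \frac{f}{g} \right)}_{\hspace{-2pt}s}}=0
\end{equation*}
serving as the bridge between the notions of \emph{helicoid} and \emph{minimal surface}.

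For the forward direction, I would assume $\varphi_{(\alpha, q_o)}$ is a helicoid. Since the helicoid is a minimal surface (indeed $H=0$ on the helicoid), the OT-ruled surface is minimal, and part (i) of the preceding theorem immediately yields the displayed equality. For the converse, I would assume the displayed equality holds; then part (i) of the preceding theorem gives that $\varphi_{(\alpha, q_o)}$ is minimal. Because the hypothesis states that $\alpha$ is not a plane curve, Theorem \ref{PlaneOT} guarantees that $\varphi_{(\alpha, q_o)}$ is \emph{not} a plane. Now the Catalan Theorem applies: among all ruled surfaces other than planes, only the helicoid (and fragments of it) is minimal. Hence $\varphi_{(\alpha, q_o)}$ must be a helicoid, completing the equivalence.

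The only point requiring a little care is the role of the hypothesis ``$\alpha$ is not a plane curve.'' Its purpose is precisely to rule out the plane as an exceptional minimal ruled surface in Catalan's dichotomy, via Theorem \ref{PlaneOT}. Without it, a minimal OT-ruled surface could in principle be a plane rather than a helicoid, so the converse would fail. I would therefore make explicit in the write-up that it is exactly this hypothesis, combined with Theorem \ref{PlaneOT}, that licenses the application of the Catalan Theorem. I do not anticipate any genuine obstacle here, as the substantive analytic work—deriving $K$, $H$, and the minimality condition—has already been carried out in the theorem immediately preceding the corollary; the corollary is a synthesis of that result with two cited classical facts.
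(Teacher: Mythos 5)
Your proposal is correct and follows essentially the same route as the paper, which presents this corollary as an immediate consequence of the minimality characterization in the preceding theorem combined with the Catalan Theorem (the paper gives no separate proof beyond the phrase ``considering Catalan Theorem''). Your explicit handling of the hypothesis that $\alpha$ is not a plane curve, via Theorem \ref{PlaneOT}, to exclude the plane case in Catalan's dichotomy is exactly the intended (implicit) argument.
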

	
	Now, we will consider the special curves lying on an OT-ruled surface $\varphi_{(\alpha, q_o)}$.
	
	Let us consider the tangent space ${{T}_{p}}{{\varphi }_{(\alpha ,{{q}_{o}})}}$ and its base $\left\{ \frac{\partial {{{\vec{\varphi }}}_{(\alpha ,{{q}_{o}})}}}{\partial s},\frac{\partial {{{\vec{\varphi }}}_{(\alpha ,{{q}_{o}})}}}{\partial u} \right\}$ at a point $p\in {{\varphi }_{(\alpha ,{{q}_{o}})}}$. For any tangent vector ${{\vec{v}}_{p}}\in {{T}_{p}}{{\varphi }_{(\alpha ,{{q}_{o}})}}$, the Weingarten map of the OT-ruled surface $\varphi_{(\alpha, q_o)}$ is defined by ${{S}_{p}}=-{{D}_{p}}\vec{v}:{{T}_{p}}{{\varphi }_{(\alpha ,{{q}_{o}})}}\to {{T}_{{{{\vec{v}}}_{p}}}}{{S}^{2}}$ where ${{S}^{2}}$ is unit sphere with center origin. Therefore, we have
	\begin{equation*}
		\begin{split}
			S_p \left( \frac{\partial {{{\vec{\varphi }}}_{(\alpha ,{{q}_{o}})}}}{\partial s} \right) & = - D_{\frac{\partial {{{\vec{\varphi }}}_{(\alpha ,{{q}_{o}})}}}{\partial s}} \vec{U}(s,u),\\
			& = A_1 (s,u) \frac{\partial {{{\vec{\varphi }}}_{(\alpha ,{{q}_{o}})}}}{\partial s} + A_2 (s,u) \frac{\partial {{{\vec{\varphi }}}_{(\alpha ,{{q}_{o}})}}}{\partial u},
		\end{split}
	\end{equation*}
	and
	\begin{equation*}
		\begin{split}
			S_p \left( \frac{\partial {{{\vec{\varphi }}}_{(\alpha ,{{q}_{o}})}}}{\partial u} \right) & = - D_{\frac{\partial {{{\vec{\varphi }}}_{(\alpha ,{{q}_{o}})}}}{\partial u}} \vec{U}(s,u),\\
			& = B_1 (s,u) \frac{\partial {{{\vec{\varphi }}}_{(\alpha ,{{q}_{o}})}}}{\partial s} + B_2 (s,u) \frac{\partial {{{\vec{\varphi }}}_{(\alpha ,{{q}_{o}})}}}{\partial u},
		\end{split}
	\end{equation*}
	where
	\begin{equation*}
		\begin{split}
			{{A}_{1}}&=\frac{-1}{{{\left( {{f}^{2}}+{{g}^{2}} \right)}^{3/2}}}\left[ {{g}^{2}}{{\left( \frac{f}{g} \right)}_{\hspace{-2pt}s}}+\left( {{f}^{2}}+{{g}^{2}} \right)\xi  \right],\\
			{{A}_{2}}&=\frac{1}{{{\left( {{f}^{2}}+{{g}^{2}} \right)}^{3/2}}}\left[ \left( {{f}^{2}}+{{g}^{2}} \right)(f\mu +g\eta +\xi \cos \theta )+{{g}^{2}}\cos \theta {{\left( \frac{f}{g} \right)}_{\hspace{-2pt}s}} \right],\\
			{{B}_{1}}&=\frac{\mu \sin \theta }{{{\left( {{f}^{2}}+{{g}^{2}} \right)}^{3/2}}}, \hspace{4pt} {{B}_{2}}=\frac{-\mu \cos \theta \sin \theta }{{{\left( {{f}^{2}}+{{g}^{2}} \right)}^{3/2}}}.
		\end{split}
	\end{equation*}
	
	Thus, the matrix form of the Weingarten map can be given by
	\begin{equation} \label{WeingartenMap}
		{S}_{p}
		=
		\begin{bmatrix}
			{{A}_{1}} & {{B}_{1}}\\
			{{A}_{2}} & {{B}_{2}}\\
		\end{bmatrix}
	\end{equation}
	From (\ref{WeingartenMap}), one can easily compute the Gaussian curvature $K$ and the mean curvature $H$ by considering the equalities $K=\det ({{S}_{p}})$ and $H=\frac{1}{2}tr({{S}_{p}})$ and the results given in (\ref{KandH}) are obtained. Moreover, from these results, for the parameter curves, we have the following corollary:
	
	\begin{corollary}
		\begin{enumerate} [(i)]
			\item The parameter curves ${{\vec{\varphi }}_{(\alpha, {{q}_{o}})}}(s,{{u}_{0}})$, ($u_0$ is constant) are lines of curvature if and only if ${{A}_{2}}=0$ or equivalently, $\left( {{f}^{2}}+{{g}^{2}} \right)(f\mu +g\eta +\xi \cos \theta )+{{g}^{2}}\cos \theta {{\left( \frac{f}{g} \right)}_{\hspace{-2pt}s}}=0$ holds.
			\item The parameter curves ${{\vec{\varphi }}_{(\alpha ,{{q}_{o}})}}({{s}_{0}},u)$, ($s_0$ is constant) are lines of curvature if and only if ${{B}_{1}}=0$ or equivalently, the OT-ruled surface $\varphi_{(\alpha, q_o)}$ is a plane or ${{\varphi }_{(\alpha ,{{q}_{o}})}}={{\varphi }_{(\alpha, T)}}$.
		\end{enumerate}
	\end{corollary}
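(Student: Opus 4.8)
The plan is to invoke the standard characterization of lines of curvature in terms of the shape operator: a curve on $\varphi_{(\alpha, q_o)}$ is a line of curvature if and only if, at each of its points, its tangent vector lies along a principal direction, i.e., is an eigenvector of the Weingarten map $S_p$. Since the parameter curves have the coordinate vector fields $\partial \vec{\varphi}_{(\alpha ,q_o)}/\partial s$ and $\partial \vec{\varphi}_{(\alpha ,q_o)}/\partial u$ as their velocities, and these are precisely the ordered basis in which $S_p$ is expressed in (\ref{WeingartenMap}), I can read off the eigenvector conditions directly from that matrix without any further computation.

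For part (i), the $s$-parameter curves $\vec{\varphi}_{(\alpha, q_o)}(s,u_0)$ have velocity $\partial \vec{\varphi}_{(\alpha ,q_o)}/\partial s$, which is the first basis vector. From the defining relation $S_p(\partial \vec{\varphi}_{(\alpha ,q_o)}/\partial s) = A_1\,\partial \vec{\varphi}_{(\alpha ,q_o)}/\partial s + A_2\,\partial \vec{\varphi}_{(\alpha ,q_o)}/\partial u$, this image is proportional to $\partial \vec{\varphi}_{(\alpha ,q_o)}/\partial s$ exactly when the $\partial \vec{\varphi}_{(\alpha ,q_o)}/\partial u$-component vanishes, that is, when $A_2=0$. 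Because the scalar factor $\left(f^2+g^2\right)^{-3/2}$ is finite and nonzero at every regular point, $A_2=0$ is equivalent to the vanishing of the bracketed expression, giving the stated equation $\left(f^2+g^2\right)(f\mu+g\eta+\xi\cos\theta)+g^2\cos\theta\,(f/g)_s=0$.

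For part (ii), the $u$-parameter curves (the rulings) $\vec{\varphi}_{(\alpha, q_o)}(s_0,u)$ have velocity $\partial \vec{\varphi}_{(\alpha ,q_o)}/\partial u$, the second basis vector, and $S_p(\partial \vec{\varphi}_{(\alpha ,q_o)}/\partial u)=B_1\,\partial \vec{\varphi}_{(\alpha ,q_o)}/\partial s + B_2\,\partial \vec{\varphi}_{(\alpha ,q_o)}/\partial u$ is proportional to $\partial \vec{\varphi}_{(\alpha ,q_o)}/\partial u$ if and only if $B_1=0$. After clearing the nonzero prefactor, $B_1=0$ reduces to $\mu\sin\theta=0$, and since $\mu=\tau\sin\theta$ this is $\tau\sin^2\theta=0$, i.e. $\tau=0$ or $\sin\theta=0$. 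I then translate these two factors into the geometric conclusions: $\tau=0$ is equivalent to $\varphi_{(\alpha, q_o)}$ being a plane by Theorem \ref{PlaneOT}, while $\sin\theta=0$ means $\theta=k\pi$, which by the definition (\ref{OT-ruledSurfaceEquation}) and Remark \ref{remark} is exactly the tangent-surface case $\varphi_{(\alpha ,q_o)}=\varphi_{(\alpha ,T)}$.

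The computational content here is essentially nil, since the shape-operator entries are already supplied; the only points requiring care are, first, confirming that the common factor $\left(f^2+g^2\right)^{-3/2}$ is nonzero at regular points so that the vanishing of $A_2$ and $B_1$ genuinely reduces to the bracketed numerators, and second, interpreting $\mu\sin\theta=0$ correctly as an identity along the relevant curve (and, when one asks for all rulings at once, as an identity on the surface) so that its two factors match cleanly the plane and tangent-surface alternatives via the previously established results. This bookkeeping, rather than any genuine obstacle, is the main thing to get right.
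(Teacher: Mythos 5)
Your proposal is correct and follows exactly the route the paper intends: the corollary is stated as an immediate consequence of the Weingarten matrix (\ref{WeingartenMap}), with the parameter-curve tangents being the basis vectors, so being a line of curvature reduces to the off-diagonal entries $A_2$ and $B_1$ vanishing, and then $B_1=0$ is translated into $\tau=0$ or $\sin\theta=0$ via Theorem \ref{PlaneOT} and the tangent-surface case. Your added care about the nonvanishing prefactor $(f^2+g^2)^{-3/2}$ and the factorization $\mu\sin\theta=\tau\sin^2\theta$ is exactly the bookkeeping the paper leaves implicit.
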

	
	Considering the characteristic equation $\det \left( {{S}_{p}}-\lambda \text{I} \right)=0$, the principal curvatures of OT-ruled surface $\varphi_{(\alpha, q_o)}$ are obtained as
	\begin{equation*}
		{{\lambda }_{1,2}}=\frac{{{A}_{1}}+{{B}_{2}}\pm \sqrt{{{\left( {{A}_{1}}-{{B}_{2}} \right)}^{2}}+4{{A}_{2}}{{B}_{1}}}}{2}
	\end{equation*}
	where $I$ is $2 \times 2$ unit matrix. Then, the principal directions are obtained as
	\begin{equation*}
		{{\vec{e}}_{1}}=\frac{1}{{{B}_{1}}}\left( {{B}_{1}}\frac{\partial {{{\vec{\varphi }}}_{(\alpha ,{{q}_{o}})}}}{\partial s}+k{{A}_{2}}\frac{\partial {{{\vec{\varphi }}}_{(\alpha ,{{q}_{o}})}}}{\partial u} \right), \hspace{4pt} {{\vec{e}}_{2}}=\frac{1}{m{{A}_{2}}}\left( {{B}_{1}}\frac{\partial {{{\vec{\varphi }}}_{(\alpha ,{{q}_{o}})}}}{\partial s}+m{{A}_{2}}\frac{\partial {{{\vec{\varphi }}}_{(\alpha ,{{q}_{o}})}}}{\partial u} \right),
	\end{equation*}
	where $k,m$ are scalar functions such that
	\begin{equation*}
		\frac{{{\lambda }_{1}}-{{A}_{1}}}{{{A}_{2}}}=\frac{{{B}_{1}}}{{{\lambda }_{1}}-{{A}_{2}}}=k,  \hspace{4pt}   \frac{{{\lambda }_{2}}-{{A}_{1}}}{{{A}_{2}}}=\frac{{{B}_{1}}}{{{\lambda }_{2}}-{{A}_{2}}}=m.
	\end{equation*}
	
	Let now $\beta (t)={{\varphi }_{(\alpha ,{{q}_{o}})}}\left( s(t),u(t) \right)$ be a unit speed curve on $\varphi_{(\alpha, q_o)}$ with arc length parameter $t$ and unit tangent vector ${{\vec{v}}_{p}}\in {{T}_{p}}{{\varphi }_{(\alpha ,{{q}_{o}})}}$ at the point $\beta ({{t}_{o}})=p$ on $\varphi_{(\alpha, q_o)}$. The derivative of $\beta$ with respect to $t$ has the form
	\begin{equation*}
		\dot{\vec{\beta }}(t)=\frac{\partial {{{\vec{\varphi }}}_{(\alpha ,{{q}_{o}})}}}{\partial s}\frac{ds}{dt}+\frac{\partial {{{\vec{\varphi }}}_{(\alpha ,{{q}_{o}})}}}{\partial u}\frac{du}{dt}
	\end{equation*}
	where $\dot{\vec{\beta }}=\frac{d\vec{\beta }}{dt}$. For this tangent vector, we can write
	\begin{equation} \label{TangentVector}
		{{\vec{v}}_{p}}=C(s,u)\frac{\partial {{{\vec{\varphi }}}_{(\alpha ,{{q}_{o}})}}}{\partial s}+D(s,u)\frac{\partial {{{\vec{\varphi }}}_{(\alpha ,{{q}_{o}})}}}{\partial u}
	\end{equation}
	where $C,D$ are smooth functions defined by $C(t)=C\left( s(t),u(t) \right)=\frac{ds}{dt}=\dot{s}$ and $D(t)=D\left( s(t),u(t) \right)=\frac{du}{dt}=\dot{u}$. Substituting (\ref{partialderivativesofvarphi}) in (\ref{TangentVector}), gives
	\begin{equation*}
		{{\vec{v}}_{p}}=\left( C\cos \theta +D \right){{\vec{q}}_{o}}+Cg\vec{B}+Cf\vec{r}
	\end{equation*}
	where ${{\left( C\cos \theta +D \right)}^{2}}+{{C}^{2}}({{f}^{2}}+{{g}^{2}})=1$. Also, by using the linearity of the Weingarten map, we get
	\begin{equation*}
		{{S}_{p}}({{\vec{v}}_{p}})=\left[ \cos \theta \left( C{{A}_{1}}+D{{B}_{1}} \right)+\left( C{{A}_{2}}+D{{B}_{2}} \right) \right]{{\vec{q}}_{o}}+\left( C{{A}_{1}}+D{{B}_{1}} \right)\left( g\vec{B}+f\vec{r} \right)
	\end{equation*}
	and so on, the normal curvature ${{k}_{n}}$ in the direction ${{\vec{v}}_{p}}$ is computed as
	\begin{equation} \label{normalcurvature}
		\begin{split}
			{{k}_{n}}({{{\vec{v}}}_{p}})&=\left\langle {{S}_{p}}({{{\vec{v}}}_{p}}),{{{\vec{v}}}_{p}} \right\rangle\\
			&=C\left[ \left( C\cos \theta +D \right)\left( {{A}_{1}}\cos \theta +{{A}_{2}} \right)+\left( C{{A}_{1}}+D{{B}_{1}} \right)\left( {{f}^{2}}+{{g}^{2}} \right) \right].
		\end{split}
	\end{equation}
	
	Then, from (\ref{normalcurvature}), we have the following theorem:
	
	\begin{theorem}
		The surface curve $\beta (t)={{\varphi }_{(\alpha ,{{q}_{o}})}}\left( s(t),u(t) \right)$ with unit tangent ${{\vec{v}}_{p}}$ is an asymptotic curve if and only if $\beta (t)$ is a ruling or $\left( C\cos \theta +D \right)\left( {{A}_{1}}\cos \theta +{{A}_{2}} \right)+\left( C{{A}_{1}}+D{{B}_{1}} \right)\left( {{f}^{2}}+{{g}^{2}} \right)=0$ holds.
	\end{theorem}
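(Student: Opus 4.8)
The plan is to invoke the standard characterization that a regular surface curve is asymptotic precisely when its normal curvature vanishes identically along it, and then to read off the two stated cases directly from the normal-curvature formula (\ref{normalcurvature}) that has already been established. In this way the entire argument reduces to factoring a single expression and interpreting each factor geometrically.

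First I would recall that $\beta$ is an asymptotic curve if and only if $k_n(\vec{v}_p)=0$ at every point of $\beta$. Substituting the closed form (\ref{normalcurvature}),
\begin{equation*}
k_n(\vec{v}_p)=C\Big[(C\cos\theta+D)(A_1\cos\theta+A_2)+(CA_1+DB_1)(f^2+g^2)\Big],
\end{equation*}
one sees at once that $k_n=0$ holds if and only if either $C=0$ or the bracketed quantity vanishes. The second alternative is exactly the displayed condition in the statement, so the only remaining task is to identify the geometric meaning of the first.

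Next I would interpret the condition $C=0$. Since $C(t)=\dot{s}$ by definition, the vanishing $C=0$ forces $s$ to be constant along $\beta$, so that $\beta(t)=\varphi_{(\alpha,q_o)}(s_0,u(t))$ is a $u$-parameter curve of the surface, that is, a ruling. The unit-speed normalization $(C\cos\theta+D)^2+C^2(f^2+g^2)=1$ then reduces to $D=\pm 1$, confirming that this curve is regular and traversed with unit speed. Conversely, any ruling has $s$ constant and hence $C=0$. This establishes the equivalence between the vanishing of the first factor and $\beta$ being a ruling, and combining the two cases yields the stated dichotomy.

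I do not expect a genuine obstacle here: once (\ref{normalcurvature}) is in hand the theorem is essentially an observation, and the argument is a short corollary of that computation. The only point requiring any care is the geometric identification of $C=0$ with the rulings, which follows immediately from the definition $C=\dot{s}$ together with the fact that the $u$-parameter curves of $\varphi_{(\alpha,q_o)}$ are precisely its generating lines.
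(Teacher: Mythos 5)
Your proposal is correct and follows essentially the same route as the paper: the paper derives the normal curvature formula (\ref{normalcurvature}) and obtains the theorem directly from it by factoring out $C$, identifying $C=\dot{s}=0$ with the rulings exactly as you do (compare the paper's Case 3, where $s=s_0$ constant gives $\vec{v}_p=\vec{q}_o$). Your additional remark that the unit-speed condition forces $D=\pm 1$ when $C=0$ is a small bonus of rigor not spelled out in the paper.
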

	Similarly, the geodesic curvature ${{\kappa }_{g}}$ and the geodesic torsion ${{\tau }_{g}}$ of the curve $\beta (t)={{\varphi }_{(\alpha, {{q}_{o}})}}\left( s(t),u(t) \right)$ are computed as
	\begin{equation*}
		\begin{split}
			{{\kappa }_{g}}=\frac{1}{\sqrt{{{f}^{2}}+{{g}^{2}}}}&\left[ \left( C\cos \theta +D \right) \right.\left( -\left( {{f}^{2}}+{{g}^{2}} \right)\dot{C} \right. \\ 
			& \left. +C\left( \eta f-\mu g \right)\left( 2D+\cos \theta  \right)-\frac{1}{2}C{{\left( {{f}^{2}}+{{g}^{2}} \right)}_{s}} \right) \\ 
			& \,+Cg\left( \dot{C}g\cos \theta -Cg{\theta }'\sin \theta -\mu C{{g}^{2}}+fC\eta g+\dot{D}g \right) \\ 
			& \left. \,+Cf\left( \dot{C}f\cos \theta -Cf{\theta }'\sin \theta -\mu fgC+C{{f}^{2}}\eta +\dot{D}f \right) \right] \\ 
		\end{split}
	\end{equation*}
	and
	\begin{equation*}
		{{\tau }_{g}}=\sqrt{{{f}^{2}}+{{g}^{2}}}\left[ C\left( C{{A}_{2}}+D{{B}_{2}} \right)-D\left( C{{A}_{1}}+D{{B}_{1}} \right) \right]
	\end{equation*}
	respectively. Then, we have the followings:
	
	\begin{theorem}
		The surface curve $\beta (t)={{\varphi }_{(\alpha ,{{q}_{o}})}}\left( s(t),u(t) \right)$ with unit tangent ${{\vec{v}}_{p}}$ is a geodesic if and only if
		\begin{equation*}
			\begin{split}
			&\left( C\cos \theta +D \right)\left( -\left( {{f}^{2}}+{{g}^{2}} \right)\dot{C}+C\left( \eta f-\mu g \right)\left( 2D+\cos \theta  \right)-\frac{1}{2}C{{\left( {{f}^{2}}+{{g}^{2}} \right)}_{s}} \right) \\ 
			& \,\,\,\,\,\,\,+Cg\left( \dot{C}g\cos \theta -Cg{\theta }'\sin \theta -\mu C{{g}^{2}}+fC\eta g+\dot{D}g \right) \\ 
			& \,\,\,\,\,\,\,+Cf\left( \dot{C}f\cos \theta -Cf{\theta }'\sin \theta -\mu fgC+C{{f}^{2}}\eta +\dot{D}f \right)=0 \\ 
			\end{split}
		\end{equation*}
		holds.
	\end{theorem}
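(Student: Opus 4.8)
The plan is to invoke the standard characterization of geodesics from the classical theory of surfaces: a curve $\beta$ lying on a regular surface is a geodesic if and only if its geodesic curvature $\kappa_g$ vanishes identically along $\beta$. Since the heavy lifting has already been done — the geodesic curvature of the surface curve $\beta(t)={{\varphi }_{(\alpha ,{{q}_{o}})}}\left( s(t),u(t) \right)$ was computed in the displayed expression for $\kappa_g$ immediately preceding the statement — the proof reduces to reading off the vanishing condition from that formula.

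First I would recall that the computed $\kappa_g$ has the factored form
\begin{equation*}
	\kappa_g = \frac{1}{\sqrt{f^2+g^2}} \left[\, \Phi(C,D,\dot{C},\dot{D},s,u) \,\right],
\end{equation*}
where $\Phi$ denotes the bracketed expression in the formula for $\kappa_g$, that is, precisely the left-hand side of the displayed equation in the theorem. The key observation is that the scalar prefactor $1/\sqrt{f^2+g^2}$ is strictly positive at every regular point of $\varphi_{(\alpha, q_o)}$: the quantity $f^2+g^2$ is the squared norm of the surface-normal direction $f\vec{B}-g\,\vec{r}$ appearing in the Gauss map \eqref{GaussMap}, and it vanishes only on the singular set $S$. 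Consequently, away from singular points, $\kappa_g=0$ if and only if $\Phi=0$.

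The two directions of the equivalence then follow immediately: if $\beta$ is a geodesic, then $\kappa_g\equiv 0$, which forces $\Phi\equiv 0$; conversely, if the displayed equation $\Phi=0$ holds, then $\kappa_g\equiv 0$ and hence $\beta$ is a geodesic. I would conclude by assembling these two implications. There is no genuine computational obstacle here, since the derivation of $\kappa_g$ is already in hand; the only mild technical point worth a sentence is the restriction to regular points, which guarantees $f^2+g^2>0$ and thereby legitimizes dividing out the positive prefactor. In effect, the theorem is a direct restatement of the condition $\kappa_g=0$.
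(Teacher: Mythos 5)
Your proposal is correct and is exactly the argument the paper intends: the theorem is stated as an immediate consequence of the displayed formula $\kappa_g = \frac{1}{\sqrt{f^2+g^2}}\,[\,\Phi\,]$, so the equivalence is just $\kappa_g \equiv 0 \Leftrightarrow \Phi \equiv 0$ after dividing out the positive prefactor at regular points (where $f^2+g^2 = \left\| \partial_s \vec{\varphi} \times \partial_u \vec{\varphi} \right\|^2 > 0$). Your explicit remark about restricting to regular points is a small but worthwhile addition that the paper leaves tacit.
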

	
	Now, we can investigate some special cases:
	
	\textit{Case 1:} Let $\varphi_{(\alpha, q_o)}$ be $\varphi_{(\alpha, T)}$. Then,
	\begin{equation*}
		\begin{split}
			{{k}_{n}}&={{C}^{2}}u\kappa \tau \\
			{{\kappa }_{g}}&=C\left( C+D \right)\left[ u{\kappa }'+\kappa \left( 2D+1 \right) \right]+u\kappa \left( \dot{C}D-C\dot{D} \right)+{{C}^{2}}{{u}^{2}}{{\kappa }^{3}} \\
			{{\tau }_{g}}&=C\tau \left( C+D \right) 
		\end{split}
	\end{equation*}
	and for the curve $\beta (t)={{\varphi }_{(\alpha ,T)}}\left( s(t),u(t) \right)$, we have followings:
	\begin{enumerate}[(i)]
		\item $\beta (t)={{\varphi }_{(\alpha ,T)}}\left( s(t),u(t) \right)$ is an asymptotic curve if and only if $\beta (t)$ is a ruling or $\alpha$ is a plane curve.
		\item $\beta (t)={{\varphi }_{(\alpha ,T)}}\left( s(t),u(t) \right)$ is a geodesic if and only if
		\begin{equation*}
			C\left( C+D \right)\left[ u{\kappa }'+\kappa \left( 2D+1 \right) \right]+u\kappa \left( \dot{C}D-C\dot{D} \right)+{{C}^{2}}{{u}^{2}}{{\kappa }^{3}}=0
		\end{equation*}
		holds.
		\item $\beta (t)={{\varphi }_{(\alpha ,T)}}\left( s(t),u(t) \right)$ is a line of curvature if and only if one of the followings holds
		\subitem \hspace{-16pt} (a) $\beta (t)$ is a ruling, \hspace{4pt} (b) $\alpha$ is a plane curve, \hspace{4pt} (c) $s(t)=-u(t)+c$, where $c$ is integration constant.
	\end{enumerate}
	
	\textit{Case 2}: Let $\varphi_{(\alpha, q_o)}$ be $\varphi_{(\alpha, N)}$. Then,
	\begin{equation*}
		\begin{split}
			{{k}_{n}}&=\frac{C\left[ C\left( {{u}^{2}}{\kappa }'\tau +\left( 1-u\kappa  \right)u{\tau }' \right)+2D\tau  \right]}{\sqrt{{{\left( 1-u\kappa  \right)}^{2}}+{{u}^{2}}{{\tau }^{2}}}}\\
			{{\kappa }_{g}}&=\frac{1}{\sqrt{{{\left( 1-u\kappa  \right)}^{2}}+{{u}^{2}}{{\tau }^{2}}}}\left[ \dot{C}D \right.\left( -\left( {{\left( 1-u\kappa  \right)}^{2}}+{{u}^{2}}{{\tau }^{2}} \right) \right. \\ 
			& \hspace{50pt} \left. +2CD\left( \kappa -u\left( {{\kappa }^{2}}+{{\tau }^{2}} \right) \right)-C\left( \left( 1-u\kappa  \right){\kappa }'+{{u}^{2}}\tau {\tau }' \right) \right) \\ 
			& \hspace{50pt} +Cu\tau \left( -\tau C{{u}^{2}}{{\tau }^{2}}+C\left( 1-u\kappa  \right)u\kappa \tau +\dot{D}u\tau  \right) \\ 
			& \hspace{50pt} \left. +C\left( 1-u\kappa  \right)\left( -C\left( 1-u\kappa  \right)u{{\tau }^{2}}+C\kappa {{\left( 1-u\kappa  \right)}^{2}}+\dot{D}\left( 1-u\kappa  \right) \right) \right]\\
			{{\tau }_{g}}&={{C}^{2}}\tau -\frac{D\left[ Cu\left( u{\kappa }'\tau +\left( 1-u\kappa  \right){\tau }' \right)-D\tau  \right]}{\sqrt{{{\left( 1-u\kappa  \right)}^{2}}+{{u}^{2}}{{\tau }^{2}}}}
		\end{split}
	\end{equation*}
	and for the curve $\beta (t)={{\varphi }_{(\alpha ,N)}}\left( s(t),u(t) \right)$ with unit tangent ${{\vec{v}}_{p}}$, we have followings:
	\begin{enumerate} [(i)]
		\item $\beta (t)={{\varphi }_{(\alpha ,N)}}\left( s(t),u(t) \right)$ is an asymptotic curve if and only if $\beta (t)$ is a ruling or
		\begin{equation*}
			C\left( {{u}^{2}}{\kappa }'\tau +\left( 1-u\kappa  \right)u{\tau }' \right)+2D\tau =0
		\end{equation*}
		holds.
		\item $\beta (t)={{\varphi }_{(\alpha ,N)}}\left( s(t),u(t) \right)$ is a geodesic if and only if
		\begin{equation*}
			\begin{split}
				& \dot{C}D\left( -\left( {{\left( 1-u\kappa  \right)}^{2}}+{{u}^{2}}{{\tau }^{2}} \right) \right. \\ 
				& \hspace{30pt} \left. +2CD\left( \kappa -u\left( {{\kappa }^{2}}+{{\tau }^{2}} \right) \right)-C\left( \left( 1-u\kappa  \right){\kappa }'+{{u}^{2}}\tau {\tau }' \right) \right) \\ 
				& \hspace{30pt} +Cu\tau \left( -\tau C{{u}^{2}}{{\tau }^{2}}+C\left( 1-u\kappa  \right)u\kappa \tau +\dot{D}u\tau  \right) \\ 
				& \hspace{30pt} +C\left( 1-u\kappa  \right)\left( -C\left( 1-u\kappa  \right)u{{\tau }^{2}}+C\kappa {{\left( 1-u\kappa  \right)}^{2}}+\dot{D}\left( 1-u\kappa  \right) \right)=0 \\ 
			\end{split}
		\end{equation*}
		holds.
		\item $\beta (t)={{\varphi }_{(\alpha ,N)}}\left( s(t),u(t) \right)$ is a line of curvature if and only if
		\begin{equation*}
			\frac{{{C}^{2}}}{D}=\frac{Cu\left( u{\kappa }'\tau +\left( 1-u\kappa  \right){\tau }' \right)-D\tau }{\tau \sqrt{{{\left( 1-u\kappa  \right)}^{2}}+{{u}^{2}}{{\tau }^{2}}}}
		\end{equation*}
		holds.
	\end{enumerate}
	
	\textit{Case 3}: Let $s={{s}_{0}}$ be constant. Then, $C=\dot{s}=0$ and we get that ${{\vec{v}}_{p}}={{\vec{q}}_{o}}$, i.e., $\beta (t)$ is a ruling. Then, we have followings:
	\begin{equation*}
		{{k}_{n}}=0, \hspace{4pt} {{\kappa }_{g}}=0, \hspace{4pt} {{\tau }_{g}}=-\frac{{{D}^{2}}\mu \sin \theta }{{{f}^{2}}+{{g}^{2}}} 
	\end{equation*}
	which give us
	\begin{enumerate}[(i)]
		\item All rulings are asymptotic.
		\item All rulings are geodesic.
		\item The ruling $\beta (t)={{\varphi }_{(\alpha ,{{q}_{o}})}}\left( {{s}_{0}},u(t) \right)$is a line of curvature if and only if $\mu \sin \theta =0$ which suggests that either ${{\varphi }_{(\alpha ,{{q}_{o}})}}={{\varphi }_{(\alpha ,T)}}$ or $\alpha$ is a plane curve.
	\end{enumerate}
	
	\textit{Case 4}: Let $u={{u}_{0}}$ be constant. Then, $D=\dot{u}=0$ and we have followings:
	\begin{equation*}
		\begin{split}
			{{k}_{n}}&=\frac{{{C}^{2}}}{\sqrt{{{f}^{2}}+{{g}^{2}}}}\left[ \left( f\mu +g\eta  \right)\cos \theta -{{g}^{2}}{{\left( \frac{f}{g} \right)}_{\hspace{-2pt}s}}-\left( {{f}^{2}}+{{g}^{2}} \right)\xi  \right]\\
			{{\kappa }_{g}}&=\frac{1}{\sqrt{{{f}^{2}}+{{g}^{2}}}}\left[ C\cos \theta \left( -\left( {{f}^{2}}+{{g}^{2}} \right)\dot{C} \right. \right. \\ 
			& \hspace{30pt} \left. +C\cos \theta \left( \eta f-\mu g \right)-\frac{1}{2}C{{\left( {{f}^{2}}+{{g}^{2}} \right)}_{s}} \right) \\ 
			& \hspace{30pt} +Cg\left( \dot{C}g\cos \theta -Cg{\theta }'\sin \theta -\mu C{{g}^{2}}+fC\eta g \right) \\ 
			& \hspace{30pt} \left. \,+Cf\left( \dot{C}f\cos \theta -Cf{\theta }'\sin \theta -\mu fgC+C{{f}^{2}}\eta  \right) \right] \\
			{{\tau }_{g}}&=\frac{{{C}^{2}}}{{{f}^{2}}+{{g}^{2}}}\left[ \left( {{f}^{2}}+{{g}^{2}} \right)(f\mu +g\eta +\xi \cos \theta )+{{g}^{2}}\cos \theta {{\left( \frac{f}{g} \right)}_{\hspace{-2pt}s}} \right]
		\end{split}
	\end{equation*}
	
	\begin{enumerate} [(i)]
		\item The parameter curve $\beta (t)={{\varphi }_{(\alpha ,{{q}_{o}})}}\left( s(t),{{u}_{0}} \right)$ is an asymptotic curve if and only if
		\begin{equation*}
			\left( f\mu +g\eta  \right)\cos \theta -{{g}^{2}}{{\left( \frac{f}{g} \right)}_{\hspace{-2pt}s}}-\left( {{f}^{2}}+{{g}^{2}} \right)\xi =0
		\end{equation*}
		holds.
		\item The parameter curve $\beta (t)={{\varphi }_{(\alpha ,{{q}_{o}})}}\left( s(t),{{u}_{0}} \right)$is a geodesic if and only if
		\begin{equation*}
			\begin{split}
				& C\cos \theta \left( -\left( {{f}^{2}}+{{g}^{2}} \right)\dot{C} + C\cos \theta \left( \eta f-\mu g \right)-\frac{1}{2}C{{\left( {{f}^{2}}+{{g}^{2}} \right)}_{s}} \right) \\ 
				& \hspace{15pt} +Cg\left( \dot{C}g\cos \theta -Cg{\theta }'\sin \theta -\mu C{{g}^{2}}+fC\eta g \right) \\ 
				& \hspace{15pt} +Cf\left( \dot{C}f\cos \theta -Cf{\theta }'\sin \theta -\mu fgC+C{{f}^{2}}\eta  \right)=0 \\ 
			\end{split}
		\end{equation*}
		holds.
		\item The parameter curve $\beta (t)={{\varphi }_{(\alpha ,{{q}_{o}})}}\left( s(t),{{u}_{0}} \right)$is a line of curvature if and only if
		\begin{equation*}
			\left( {{f}^{2}}+{{g}^{2}} \right)(f\mu +g\eta +\xi \cos \theta )+{{g}^{2}}\cos \theta {{\left( \frac{f}{g} \right)}_{\hspace{-2pt}s}}=0
		\end{equation*}
		holds.
	\end{enumerate}
	
	\textit{Case 5}: Let $C=\dot{s}$, $D=\dot{u}$ be non-zero constants. Then, the curve has the parametric form $\beta (t)={{\varphi }_{(\alpha ,{{q}_{o}})}}\left( {{c}_{1}}t+{{c}_{2}},{{d}_{1}}t+{{d}_{2}} \right)$ where ${{c}_{i}}, {{d}_{i}}, (i=1,2)$ are constants and we have
	\begin{equation*}
		\begin{split}
			{{k}_{n}}&=C\left[ \left( C\cos \theta +D \right)\left( {{A}_{1}}\cos \theta +{{A}_{2}} \right)+\left( C{{A}_{1}}+D{{B}_{1}} \right)\left( {{f}^{2}}+{{g}^{2}} \right) \right]\\
			{{\kappa }_{g}}&=\frac{1}{\sqrt{{{f}^{2}}+{{g}^{2}}}}\left[ \left( C\cos \theta +D \right) \right.\left( C\left( \eta f-\mu g \right)\left( 2D+\cos \theta  \right)-\frac{1}{2}C{{\left( {{f}^{2}}+{{g}^{2}} \right)}_{s}} \right) \\ 
			& \hspace{15pt} +Cg\left( -Cg{\theta }'\sin \theta -\mu C{{g}^{2}}+fC\eta g \right)\left. \,+Cf\left( -Cf{\theta }'\sin \theta -\mu fgC+C{{f}^{2}}\eta  \right) \right],\\
			{{\tau }_{g}}&=\sqrt{{{f}^{2}}+{{g}^{2}}}\left[ C\left( C{{A}_{2}}+D{{B}_{2}} \right)-D\left( C{{A}_{1}}+D{{B}_{1}} \right) \right]		
		\end{split}
	\end{equation*}
	which give followings:
	\begin{enumerate} [(i)]
		\item $\beta (t)={{\varphi }_{(\alpha ,{{q}_{o}})}}\left( {{c}_{1}}t+{{c}_{2}},{{d}_{1}}t+{{d}_{2}} \right)$ is an asymptotic curve if and only if
		\begin{equation*}
			\left( C\cos \theta +D \right)\left( {{A}_{1}}\cos \theta +{{A}_{2}} \right)+\left( C{{A}_{1}}+D{{B}_{1}} \right)\left( {{f}^{2}}+{{g}^{2}} \right)=0
		\end{equation*}
		holds.
		\item $\beta (t)={{\varphi }_{(\alpha ,{{q}_{o}})}}\left( {{c}_{1}}t+{{c}_{2}},{{d}_{1}}t+{{d}_{2}} \right)$ is a geodesic if and only if
		\begin{equation*}
			\begin{split}
			&\left( C\cos \theta +D \right)\left( +C\left( \eta f-\mu g \right)\left( 2D+\cos \theta  \right)-\frac{1}{2}C{{\left( {{f}^{2}}+{{g}^{2}} \right)}_{s}} \right) \\ 
			& \hspace{15pt} +Cg\left( -Cg{\theta }'\sin \theta -\mu C{{g}^{2}}+fC\eta g \right)+Cf\left( -Cf{\theta }'\sin \theta -\mu fgC+C{{f}^{2}}\eta  \right)=0 \\ 
			\end{split}
		\end{equation*}
		holds.
		\item $\beta (t)={{\varphi }_{(\alpha ,{{q}_{o}})}}\left( {{c}_{1}}t+{{c}_{2}},{{d}_{1}}t+{{d}_{2}} \right)$ is a line of curvature if and only if
		\begin{equation*}
			\frac{C{{A}_{1}}+D{{B}_{1}}}{C{{A}_{2}}+D{{B}_{2}}}=\text{constant}
		\end{equation*}
		holds.
	\end{enumerate}
	
	Let now consider the Frenet frame of a non-cylindrical OT-ruled surface $\varphi_{(\alpha, q_o)}$. Differentiating the ruling ${{\vec{q}}_{o}}=\cos \theta \vec{T}+\sin \theta \vec{N}$,  it follows
	\begin{equation} \label{derivativeofruling} 
		\vec{q}\hspace{2pt}'_{\hspace{-2pt} o} = -\eta \sin \theta \,\vec{T}+\eta \cos \theta \,\vec{N}+\tau \sin \theta \,\vec{B}
	\end{equation}
	Then, the central normal and central tangent vectors of OT-ruled surface $\varphi_{(\alpha, q_o)}$ are computed as 
	\begin{equation} \label{handavectors}
		\begin{split}
		\vec{h}&=\frac{1}{\sqrt{{{\eta }^{2}}+{{\tau }^{2}}{{\sin }^{2}}\theta }}\left( -\eta \sin \theta \,\vec{T}+\eta \cos \theta \,\vec{N}+\tau \sin \theta \,\vec{B} \right) \\ 
		\vec{a}&=\frac{1}{\sqrt{{{\eta }^{2}}+{{\tau }^{2}}{{\sin }^{2}}\theta }}\left( \tau {{\sin }^{2}}\theta \,\vec{T}-\tau \cos \theta \sin \theta \,\vec{N}+\eta \vec{B} \right) \\ 
		\end{split}
	\end{equation}
	respectively. From the equations (\ref{derivativeofruling}) and (\ref{handavectors}), we have following theorem:
	
	\begin{theorem}
		For the OT-ruled surface $\varphi_{(\alpha, q_o)}$ the followings are equivalent:
		\begin{enumerate} [(i)]
			\item The angle between the vectors ${{\vec{q}}_{o}}$ and $\vec{T}$ is given by $\theta =-\int_{0}^{s}{\kappa ds}$.
			\item The central normal vector $\vec{h}$ coincides with the binormal vector $\vec{B}$ of $\alpha$.
			\item The central tangent vector $\vec{a}$ lies in the osculating plane of $\alpha$.
		\end{enumerate}
	\end{theorem}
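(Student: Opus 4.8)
The plan is to reduce all three statements to the single scalar condition $\eta = 0$, where $\eta = \theta' + \kappa$ is the OT-curvature that appears in (\ref{handavectors}). Throughout, the non-cylindrical hypothesis guarantees $\vec{q}\hspace{2pt}'_{\hspace{-2pt} o} \neq 0$, so by Proposition \ref{cylindrical} the surface is not a plane; moreover, from (\ref{derivativeofruling}) one has $\|\vec{q}\hspace{2pt}'_{\hspace{-2pt} o}\|^2 = \eta^2 + \tau^2\sin^2\theta > 0$, so every radical in (\ref{handavectors}) is strictly positive and the frame $\{\vec{q}_o, \vec{h}, \vec{a}\}$ is well defined. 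The whole argument is then just component bookkeeping against (\ref{handavectors}).

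First I would settle (i)~$\Leftrightarrow~\eta = 0$. Since $\eta = \theta' + \kappa$, the integral prescription $\theta = -\int_0^s \kappa\, ds$ is equivalent to $\theta' = -\kappa$, i.e.\ to $\eta = 0$ (the integration constant being normalized by $\theta(0)=0$, exactly as in Proposition \ref{cylindrical} and Proposition \ref{strictionlineprop}). Next I would establish (ii)~$\Leftrightarrow~\eta = 0$ by reading the tangential components of $\vec{h}$ straight off (\ref{handavectors}):
\begin{equation*}
\langle \vec{h}, \vec{T}\rangle = \frac{-\eta\sin\theta}{\sqrt{\eta^2+\tau^2\sin^2\theta}}, \qquad \langle \vec{h}, \vec{N}\rangle = \frac{\eta\cos\theta}{\sqrt{\eta^2+\tau^2\sin^2\theta}}.
\end{equation*}
Because $\sin\theta$ and $\cos\theta$ never vanish simultaneously, these two components vanish together precisely when $\eta = 0$; and when $\eta = 0$ the formula collapses to $\vec{h} = (\tau\sin\theta/|\tau\sin\theta|)\,\vec{B} = \pm\vec{B}$, so $\vec{h}$ coincides with $\vec{B}$ as a direction.

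Finally I would treat (iii)~$\Leftrightarrow~\eta = 0$, the most transparent case: from (\ref{handavectors}) the binormal component of $\vec{a}$ is
\begin{equation*}
\langle \vec{a}, \vec{B}\rangle = \frac{\eta}{\sqrt{\eta^2+\tau^2\sin^2\theta}},
\end{equation*}
so $\vec{a}$ lies in the osculating plane $sp\{\vec{T},\vec{N}\}$ if and only if $\eta = 0$. Chaining the three equivalences through the common pivot $\eta = 0$ yields the theorem. I do not expect a genuine obstacle here: the only point demanding care is the non-cylindrical hypothesis, which is needed both to keep the radicals positive and, in the case $\eta = 0$, to ensure $\tau\sin\theta \neq 0$ so that $\vec{h}$ and $\vec{a}$ remain well defined.
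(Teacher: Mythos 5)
Your proposal is correct and takes essentially the same route as the paper: the paper's own (very terse) proof also pivots on the single condition $\eta=0$, stating that (i) gives $\eta=0$ and that the rest is ``clear from'' the frame formulas (\ref{handavectors}). Your write-up merely makes explicit the component bookkeeping—the vanishing of the $\vec{T}$- and $\vec{N}$-components of $\vec{h}$ and of the $\vec{B}$-component of $\vec{a}$, plus the role of the non-cylindrical hypothesis in keeping $\eta^2+\tau^2\sin^2\theta>0$—that the paper leaves implicit.
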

	\begin{proof}
		Let the angle $\theta $ be given by $\theta =-\int_{0}^{s}{\kappa d}s$. Then, we get $\eta =0$. Thus, the proof is clear from (\ref{handavectors}).
	\end{proof}
	
	\begin{corollary}
		Let the angle between the vectors ${{\vec{q}}_{o}}$ and $\vec{T}$ is given by $\theta =-\int_{0}^{s}{\kappa d}s$. Then, $\alpha $ is a general helix if and only if the OT-ruled surface ${{\varphi }_{(\alpha ,{{q}_{o}})}}$ is an $h$-slant ruled surface.
	\end{corollary}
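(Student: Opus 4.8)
The plan is to reduce the statement to the classical Lancret characterization of general helices by first exploiting the previous theorem. Since the angle is prescribed as $\theta=-\int_0^s\kappa\,ds$, we have $\eta=\theta'+\kappa=0$, and the previous theorem then gives that the central normal satisfies $\vec{h}=\pm\vec{B}$; indeed, substituting $\eta=0$ in (\ref{handavectors}) kills the $\vec{T}$- and $\vec{N}$-components and leaves only the $\vec{B}$-direction. Note also that, by (\ref{derivativeofruling}) with $\eta=0$, non-cylindricity of $\varphi_{(\alpha,q_o)}$ forces $\tau\sin\theta\neq 0$, so $\tau\neq 0$ and $\sin\theta\neq 0$ throughout; in particular $\vec{h}$ is well defined. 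Because $\vec{h}=\pm\vec{B}$, the surface is $h$-slant precisely when $\vec{B}$ makes a constant angle with some fixed unit direction $\vec{d}$, so the corollary collapses to the assertion that $\alpha$ is a general helix if and only if its binormal makes a constant angle with a fixed direction.

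For the forward implication I would assume $\alpha$ is a general helix, so by \cite{Barros} the ratio $\tau/\kappa$ is a constant which I write as $\cot\phi$ for a fixed angle $\phi$. I would then propose the candidate axis $\vec{d}=\cos\phi\,\vec{T}+\sin\phi\,\vec{B}$ and verify that it is constant: differentiating with the Frenet equations gives $\vec{d}\,'=(\kappa\cos\phi-\tau\sin\phi)\vec{N}$, which vanishes exactly because $\tau/\kappa=\cot\phi$. Since $\langle\vec{B},\vec{d}\rangle=\sin\phi$ is then constant, $\vec{B}$—and hence $\vec{h}=\pm\vec{B}$—makes a constant angle with $\vec{d}$, so $\varphi_{(\alpha,q_o)}$ is $h$-slant.

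For the converse I would assume $\vec{h}=\pm\vec{B}$ makes a constant angle with a fixed unit vector $\vec{d}$, i.e. $\langle\vec{B},\vec{d}\rangle$ is constant. Differentiating and using $\vec{B}\,'=-\tau\vec{N}$ yields $\tau\langle\vec{N},\vec{d}\rangle=0$; since $\tau\neq 0$, this gives $\langle\vec{N},\vec{d}\rangle=0$, so $\vec{d}$ lies in the rectifying plane and I may write $\vec{d}=a\vec{T}+c\vec{B}$ with $c=\langle\vec{B},\vec{d}\rangle$ constant. Differentiating this expression of the constant vector $\vec{d}$ and collecting Frenet terms gives $a'=0$ and $a\kappa-c\tau=0$, i.e. $\tau/\kappa=a/c$ is constant, so $\alpha$ is a general helix.

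The main obstacle I anticipate is not the computation but the handling of degeneracies, so that the ratio $\tau/\kappa$ is genuinely a well-defined constant. One must use non-cylindricity to secure $\tau\neq 0$ (otherwise $\vec{h}$ is undefined and the $h$-slant notion is vacuous), and one must rule out the sub-case $c=\langle\vec{B},\vec{d}\rangle=0$ in the converse: if $c=0$ then $\vec{d}=a\vec{T}$ and the constancy of $\vec{d}$ forces $a\kappa=0$, contradicting $\kappa\neq 0$ (the Frenet frame of $\alpha$ exists) together with $\vec{d}$ being a unit vector. Once these cases are cleared, the equivalence $\tau/\kappa=\text{const}\Leftrightarrow\varphi_{(\alpha,q_o)}$ is $h$-slant follows cleanly.
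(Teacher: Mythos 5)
Your proof is correct and follows essentially the same route as the paper, which states this corollary as an immediate consequence of the preceding theorem: $\theta=-\int_0^s\kappa\,ds$ gives $\eta=0$, hence $\vec{h}=\pm\vec{B}$, so $h$-slantness becomes the statement that $\vec{B}$ makes a constant angle with a fixed direction, which is the classical binormal characterization of general helices. The only difference is that the paper leaves that classical equivalence (and the non-cylindricity/degeneracy issues) implicit, while you prove it from Lancret's theorem and check the degenerate cases explicitly, which is a welcome but not conceptually different addition.
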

	
	\begin{theorem}
		The Frenet frame $\left\{ {{{\vec{q}}}_{o}},\vec{h},\vec{a} \right\}$ of OT-ruled surface ${{\varphi }_{(\alpha ,{{q}_{o}})}}$ coincides with the Frenet frame $\left\{ \vec{T},\vec{N},\vec{B} \right\}$ of base curve $\alpha $ if and only if ${{\varphi }_{(\alpha ,{{q}_{o}})}}$ is the tangent surface ${{\varphi }_{(\alpha ,T)}}$ of $\alpha$.
	\end{theorem}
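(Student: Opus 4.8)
The plan is to exploit the elementary fact that two ordered orthonormal frames coincide if and only if their corresponding members are equal, and to notice that the decisive comparison is already forced by the first vectors $\vec{q}_o$ and $\vec{T}$. Everything hinges on the identity $\vec{q}_o=\cos\theta\,\vec{T}+\sin\theta\,\vec{N}$ from (\ref{OT-ruledSurfaceEquation}): since $\{\vec{T},\vec{N},\vec{B}\}$ is an orthonormal basis, $\vec{q}_o=\vec{T}$ holds for all $s$ exactly when $\cos\theta\equiv 1$ and $\sin\theta\equiv 0$, i.e. $\theta(s)=2k\pi$, which is precisely the condition ${{\varphi }_{(\alpha ,{{q}_{o}})}}={{\varphi }_{(\alpha ,T)}}$ recorded just after Definition \ref{OT-ruledSurfaceDefinition}. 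The forward implication is then immediate: if the frames coincide, their first vectors in particular agree, $\vec{q}_o=\vec{T}$, so $\sin\theta\equiv 0$ and the surface is the tangent surface. No information from $\vec{h}$ or $\vec{a}$ is required here.

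For the converse, I would assume ${{\varphi }_{(\alpha ,{{q}_{o}})}}={{\varphi }_{(\alpha ,T)}}$, so that $\theta$ is a constant multiple of $\pi$; taking the canonical representative $\theta\equiv 0$ gives $\theta'=0$, hence $\eta=\theta'+\kappa=\kappa$, together with $\sin\theta=0$ and $\cos\theta=1$. By Remark \ref{remark} we have $\kappa\ne 0$, so the normalizing factor $\sqrt{\eta^2+\tau^2\sin^2\theta}=|\kappa|$ appearing in (\ref{handavectors}) is nonzero and the division is legitimate. Substituting these values into (\ref{handavectors}) collapses the two expressions to $\vec{h}=\vec{N}$ and $\vec{a}=\vec{B}$, while $\vec{q}_o=\vec{T}$ by construction; thus $\{\vec{q}_o,\vec{h},\vec{a}\}=\{\vec{T},\vec{N},\vec{B}\}$, as claimed.

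The obstacles here are bookkeeping rather than conceptual. The essential point is that one must invoke Remark \ref{remark} to guarantee $\kappa\ne 0$, for otherwise the denominator in (\ref{handavectors}) vanishes and $\vec{h}$ is undefined, so the tangent surface must genuinely be excluded from the degenerate straight-line case before the frame vectors can even be compared. A second, purely orientational subtlety is that the tangent surface also arises from $\theta=(2k+1)\pi$, where $\vec{q}_o=-\vec{T}$ and the induced vectors $\vec{h},\vec{a}$ acquire sign changes; the identification of the two frames is therefore to be read for the canonical choice $\theta\equiv 0$, i.e. up to the orientation of the rulings. With these two points handled, the equivalence reduces entirely to the single scalar condition $\sin\theta\equiv 0$ together with the direct substitution into (\ref{handavectors}).
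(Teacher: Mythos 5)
Your proposal is correct. Note that the paper states this theorem without any proof, so there is nothing to compare against directly; your argument — forward direction forced by the first frame vectors alone via $\vec{q}_o=\cos\theta\,\vec{T}+\sin\theta\,\vec{N}$, converse by substituting $\sin\theta=0$, $\eta=\kappa\neq 0$ (Remark \ref{remark}) into (\ref{handavectors}) — is precisely the natural computation the paper leaves implicit, and it mirrors the paper's own proof of the preceding theorem, which likewise reads the frame off (\ref{handavectors}) after killing one curvature. Your handling of the two genuine delicacies (the need for $\kappa\neq 0$ so that $\vec{h}$ is defined, and the sign ambiguity $\vec{q}_o=-\vec{T}$ at $\theta=(2k+1)\pi$, where the frames agree only up to orientation of the ruling) is more careful than the paper itself, which simply writes $\vec{q}_o=\pm\vec{T}$ for the tangent-surface case.
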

	
	\section{Examples}
	
	\begin{example}
		Let consider the general helix curve ${{\alpha }_{1}}$ given by the parametrization
		\begin{equation*}
			{{\vec{\alpha }}_{1}}(s)=\left( \cos \left( \frac{s}{\sqrt{2}} \right),\sin \left( \frac{s}{\sqrt{2}} \right),\frac{s}{\sqrt{2}} \right)
		\end{equation*}
		For the required Frenet elements of ${{\alpha }_{1}}$, we obtain
		\begin{equation*}
			\begin{split}
				& \vec{T}(s)=\left( -\frac{1}{\sqrt{2}}\sin \left( \frac{s}{\sqrt{2}} \right),\frac{1}{\sqrt{2}}\cos \left( \frac{s}{\sqrt{2}} \right),\frac{1}{\sqrt{2}} \right), \hspace{4pt} \vec{N}(s)=\left( -\cos \left( \frac{s}{\sqrt{2}} \right),-\sin \left( \frac{s}{\sqrt{2}} \right),0 \right), \\ 
				& \kappa (s)=\frac{1}{2}, \hspace{4pt} \tau (s)=\frac{1}{2}.
			\end{split}			
		\end{equation*}
		By choosing $\theta (s)=s$, we get
		\begin{equation*}
			\begin{split}
				{{{\vec{q}}}_{o}}(s)&=\left( -\frac{1}{\sqrt{2}}\cos (s)\sin \left( \frac{s}{\sqrt{2}} \right)-\sin (s)\cos \left( \frac{s}{\sqrt{2}} \right) \right., \\ 
				& \hspace{20pt} \left. \frac{1}{\sqrt{2}}\cos (s)\cos \left( \frac{s}{\sqrt{2}} \right)-\sin (s)\sin \left( \frac{s}{\sqrt{2}} \right),\,\,\frac{1}{\sqrt{2}}\cos (s) \right).\\ 
			\end{split}
		\end{equation*}
		and the OT-ruled surface ${{\varphi }_{_{1}({{\alpha }_{1}},{{q}_{o}})}}$ has the parametrization
		\begin{equation*}
			\begin{split}
				{{{\vec{\varphi }}}_{_{1}({{\alpha }_{1}},{{q}_{o}})}}&=\left( \cos \left( \frac{s}{\sqrt{2}} \right)+u\left( -\frac{1}{\sqrt{2}}\cos (s)\sin \left( \frac{s}{\sqrt{2}} \right)-\sin (s)\cos \left( \frac{s}{\sqrt{2}} \right) \right) \right., \\ 
				& \hspace{20pt} \sin \left( \frac{s}{\sqrt{2}} \right)+u\left( \frac{1}{\sqrt{2}}\cos (s)\cos \left( \frac{s}{\sqrt{2}} \right)-\sin (s)\sin \left( \frac{s}{\sqrt{2}} \right) \right), \\ 
				& \hspace{20pt} \left. \frac{s}{\sqrt{2}}+\frac{1}{\sqrt{2}}u\cos (s) \right).\\ 
			\end{split}
		\end{equation*}
		From (\ref{strictionline}), the equation of the striction line of OT-ruled surface ${{\varphi }_{_{1}({{\alpha }_{1}},{{q}_{o}})}}$ is given by
		\begin{equation*}
			\begin{split}
				{{{\vec{c}}}_{1}}(s)&=\left( \frac{\frac{3\sqrt{2}}{2}\sin (2s)\sin \left( \frac{\sqrt{2}}{2}s \right)-\cos \left( \frac{\sqrt{2}}{2}s \right)\left( 5{{\cos }^{2}}(s)+4 \right)}{{{\cos }^{2}}(s)-10}, \right. \\ 
				& \hspace{20pt} -\frac{\frac{3\sqrt{2}}{2}\sin (2s)\cos \left( \frac{\sqrt{2}}{2}s \right)+\sin \left( \frac{\sqrt{2}}{2}s \right)\left( 5{{\cos }^{2}}(s)+4 \right)}{{{\cos }^{2}}(s)-10}, \\ 
				& \hspace{20pt} \left. \frac{\sqrt{2}\left( s{{\cos }^{2}}(s)-3\sin (2s)-10s \right)}{2\left( {{\cos }^{2}}(s)-10 \right)} \right). \\ 
			\end{split}
		\end{equation*}
		The curvatures of ${{\varphi }_{_{1}({{\alpha }_{1}},{{q}_{o}})}}$ are computed as $\eta (s)=\frac{3}{2}$, $\xi (s)=\frac{1}{2}\cos (s)$, $\mu (s)=\frac{1}{2}\sin (s)$ and the functions $f$ and $g$ are given by $f(s,u)=\sin (s)+\frac{3}{2}u$, $g(s,u)=\frac{1}{2}u\sin (s)$. The graph of ${{\varphi }_{_{1}({{\alpha }_{1}},{{q}_{o}})}}$ for the intervals $s\in \left[ 0,3\pi  \right]$, $u\in \left[ -1,1 \right]$ is given in Figure \ref{fig1}. From Proposition \ref{strictionlineprop}, the base curve ${{\alpha }_{1}}$ (red) and striction line ${{c}_{1}}$ (blue) intersect at the points ${{\varphi }_{_{1}({{\alpha }_{1}},{{q}_{o}})}} (0,0)$, ${{\varphi }_{_{1}({{\alpha }_{1}},{{q}_{o}})}} (\pi,0)$, ${{\varphi }_{_{1}({{\alpha }_{1}},{{q}_{o}})}} (2\pi,0)$, ${{\varphi }_{_{1}({{\alpha }_{1}},{{q}_{o}})}} (3\pi,0)$ which are also singular points of ${{\varphi }_{_{1}({{\alpha }_{1}},{{q}_{o}})}}$ and shown with black color in Figure \ref{fig1}.
		
		\begin{figure}[!h]
			\centering
			\begin{minipage}[b]{0.49\textwidth}
				\centering
				\includegraphics[width=\textwidth]{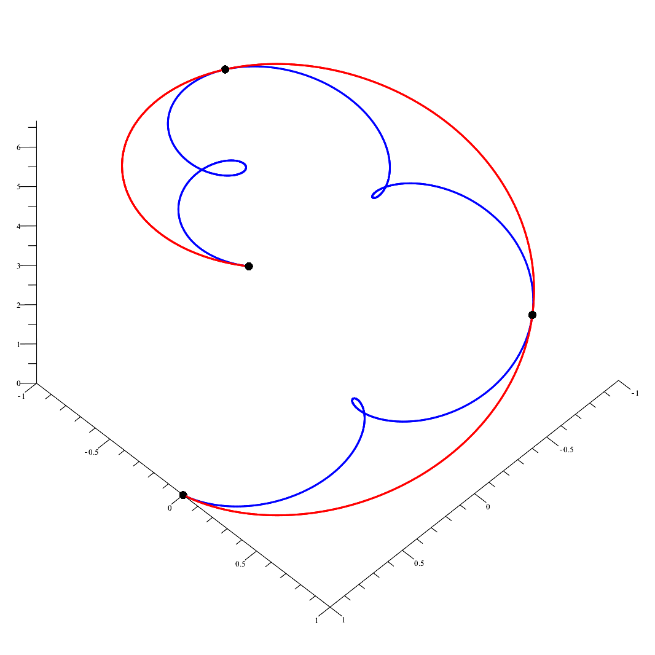}
			\end{minipage}
			\hfill
			\begin{minipage}[b]{0.49\textwidth}
				\centering
				\includegraphics[width=\textwidth]{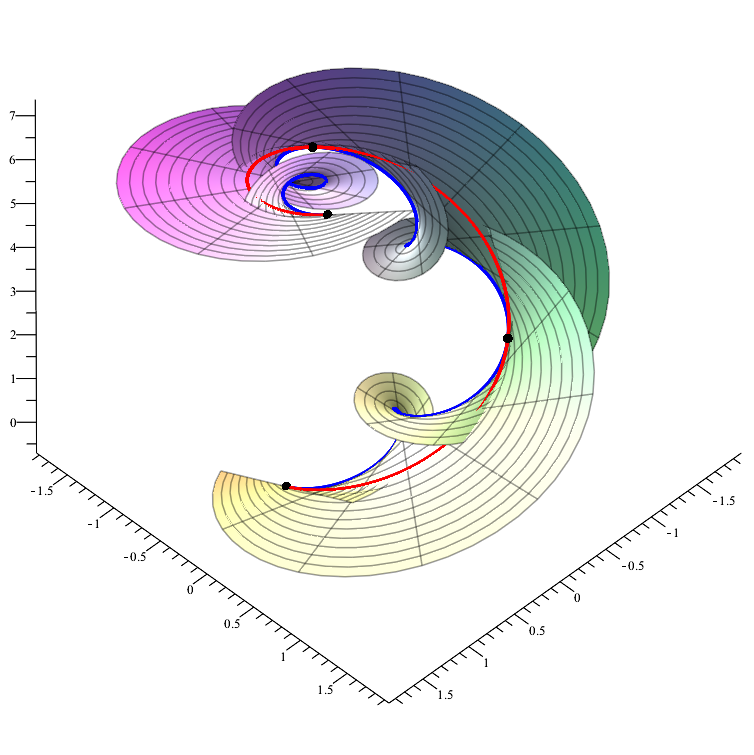}
			\end{minipage}
			\caption{The OT-ruled surface ${{\varphi }_{_{1}({{\alpha }_{1}},{{q}_{o}})}}$}
			\label{fig1}
		\end{figure}
	\end{example}
	
	\begin{example}
		Let the curve ${{\alpha }_{2}}$ be given by the parametrization
		\begin{equation*}
			{{\vec{\alpha }}_{2}}(s)=\left( \frac{3}{2}\cos \left( \frac{s}{2} \right)+\frac{1}{6}\cos \left( \frac{3s}{2} \right),\frac{3}{2}\sin \left( \frac{s}{2} \right)+\frac{1}{6}\sin \left( \frac{3s}{2} \right),\sqrt{3}\cos \left( \frac{s}{2} \right) \right)
		\end{equation*}
		whose required Frenet elements are
		\begin{equation*}
			\begin{split}
				\vec{T}(s)&=\left( -\frac{3}{4}\sin \left( \frac{s}{2} \right)-\frac{1}{4}\sin \left( \frac{3s}{2} \right),\frac{3}{4}\cos \left( \frac{s}{2} \right)+\frac{1}{4}\cos \left( \frac{3s}{2} \right),-\frac{\sqrt{3}}{2}\sin \left( \frac{s}{2} \right) \right),\\
				\vec{N}(s)&=\left( -\frac{\sqrt{3}}{2}\cos (s),-\frac{\sqrt{3}}{2}\sin (s),-\frac{1}{2} \right),\\
				\kappa (s)&=\frac{\sqrt{3}}{2}\cos \left( \frac{s}{2} \right), \hspace{4pt} \tau (s)=-\frac{\sqrt{3}}{2}\sin \left( \frac{s}{2} \right),
			\end{split}
		\end{equation*}
		where we calculate
		\begin{equation*}
			\frac{{{\kappa }^{2}}}{{{\left( {{\kappa }^{2}}+{{\tau }^{2}} \right)}^{3/2}}}{{\left( \frac{\tau }{\kappa } \right)}^{\prime }}=-\frac{\sqrt{3}}{3}=\textnormal{constant}
		\end{equation*}
		Therefore, we obtain that ${{\alpha}_{2}}$ is a slant helix. By choosing $\theta (s)=\frac{s}{2}$, we get
		\begin{equation*}
			\begin{split}
			{{{\vec{q}}}_{o}}(s)&=\left( -\frac{1}{2}\sin \left( \frac{s}{2} \right)\left( 2{{\cos }^{2}}\left( \frac{s}{2} \right)\left( \cos \left( \frac{s}{2} \right)+\sqrt{3} \right)+\cos \left( \frac{s}{2} \right)-\sqrt{3} \right), \right. \\ 
			& \hspace{20pt} \left. \cos \left( \frac{s}{2} \right)\left( {{\cos }^{2}}\left( \frac{s}{2} \right)\left( \sqrt{3}+\cos \left( \frac{s}{2} \right) \right)-\sqrt{3} \right),-\frac{1}{2}\sin \left( \frac{s}{2} \right)\left( \sqrt{3}\cos \left( \frac{s}{2} \right)+1 \right) \right). \\ 
			\end{split}			
		\end{equation*}
		Then, the parametrization of the OT-ruled surface ${{\varphi }_{_{2}({{\alpha }_{2}},{{q}_{o}})}}$ and its striction line ${{c}_{2}}$ can be written easily by using the equalities (\ref{OT-ruledSurfaceEquation}) and (\ref{strictionline}), respectively. The curvatures of that surface are
		\begin{equation*}
			\eta (s)=\frac{1}{2}+\frac{\sqrt{3}}{2}\cos \left( \frac{s}{2} \right), \hspace{4pt} \xi (s)=-\frac{\sqrt{3}}{4}\sin (s), \hspace{4pt} \mu (s)=-\frac{\sqrt{3}}{2}{{\sin }^{2}}\left( \frac{s}{2} \right). 
		\end{equation*}
		Furthermore, the functions $f$ and $g$ are calculated as
		\begin{equation*}
			f(s,u)=\sin \left( \frac{s}{2} \right)+u\left( \frac{1}{2}+\frac{\sqrt{3}}{2}\cos \left( \frac{s}{2} \right) \right), \hspace{4pt} g(s,u)=-\frac{\sqrt{3}}{2}u{{\sin }^{2}}\left( \frac{s}{2} \right).
		\end{equation*}
		The graph of ${{\varphi }_{_{2}({{\alpha }_{2}},{{q}_{o}})}}$ for intervals $s\in \left[ -2\pi ,2\pi  \right]$ and $u\in \left[ -1,1 \right]$ is given in Figure \ref{fig2}. From Proposition \ref{strictionlineprop}, the base curve ${{\alpha }_{2}}$ (red) and striction line ${{c}_{2}}$ (blue) intersect at the points
		\begin{equation*}
			\begin{split}
				p_1 &= {{\varphi }_{_{2}({{\alpha }_{2}},{{q}_{o}})}} (-2\pi,0) = {{\varphi }_{_{2}({{\alpha }_{2}},{{q}_{o}})}} (\pi,0), \hspace{4pt} p_2 = {{\varphi }_{_{2}({{\alpha }_{2}},{{q}_{o}})}} (0,0)\\
				p_3 &= {{\varphi }_{_{2}({{\alpha }_{2}},{{q}_{o}})}} \left( 2\left( \pi - \arccos \left( \frac{\sqrt{3}}{3} \right)  \right), 0  \right) , \hspace{4pt} p_4 = {{\varphi }_{_{2}({{\alpha }_{2}},{{q}_{o}})}} \left( 2\left( \pi + \arccos \left( \frac{\sqrt{3}}{3} \right)  \right), 0  \right).
			\end{split}
		\end{equation*}
		Here, ${{p}_{1}},{{p}_{2}}\in S$ are singular points of ${{\varphi }_{_{2}({{\alpha }_{2}},{{q}_{o}})}}$ ${{p}_{3}},{{p}_{4}}\in Y$ are non-singular points which are given black and green in Figure \ref{fig2}, respectively.
		
		\begin{figure}[!h]
			\centering
			\begin{minipage}[b]{0.49\textwidth}
				\centering
				\includegraphics[width=\textwidth]{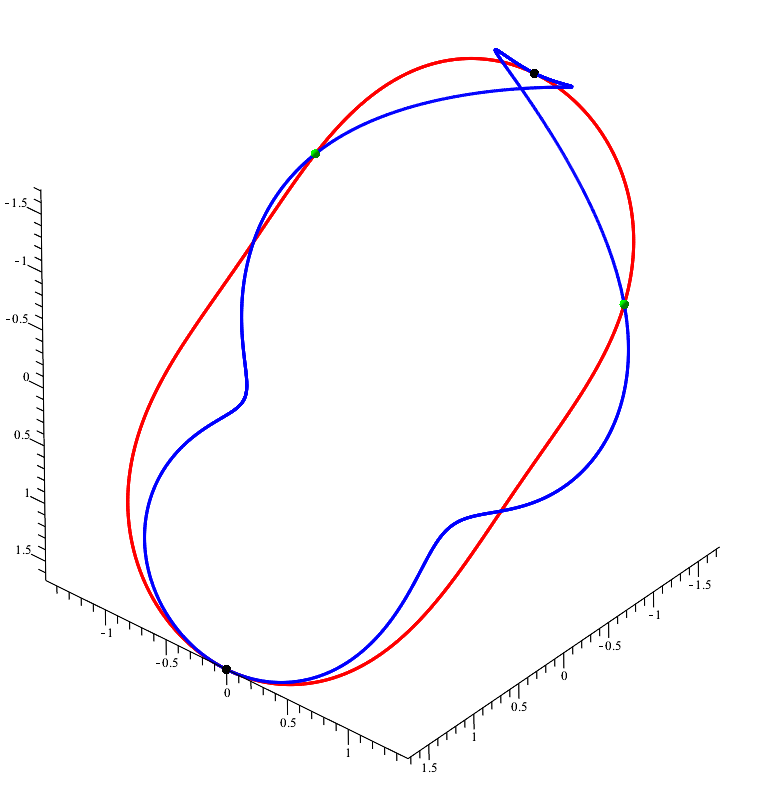}
			\end{minipage}
			\hfill
			\begin{minipage}[b]{0.49\textwidth}
				\centering
				\includegraphics[width=\textwidth]{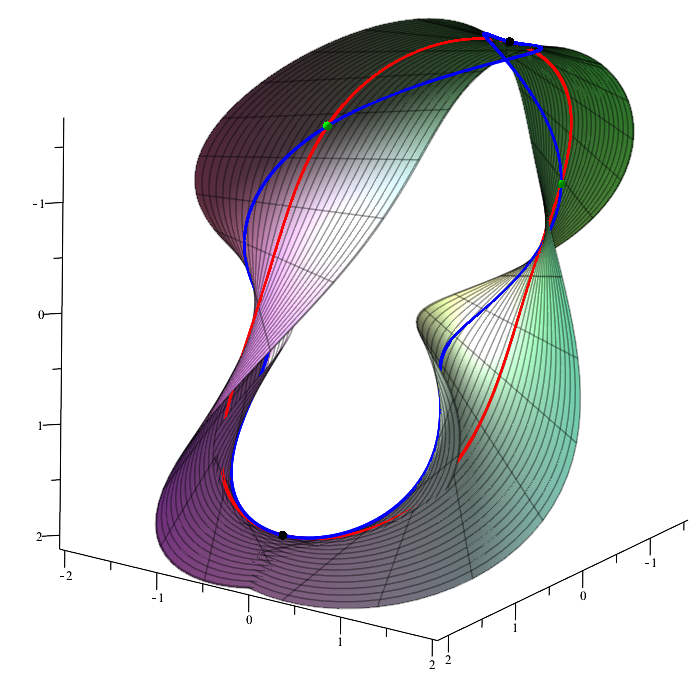}
			\end{minipage}
			\caption{The OT-ruled surface ${{\varphi }_{_{2}({{\alpha }_{2}},{{q}_{o}})}}$}
			\label{fig2}
		\end{figure}
	\end{example}
	
	\begin{example}
		Let ${{\alpha }_{3}}$ be given by the parametrization
		\begin{equation*}
			\begin{split}
				{{{\vec{\alpha }}}_{3}}(s)&=\frac{5\sqrt{26}}{26}\left( \frac{\left( \sqrt{26}-26 \right)\sin \left( \left( 1+\frac{\sqrt{26}}{13} \right)s \right)}{104+8\sqrt{26}}+\frac{\left( \sqrt{26}+26 \right)\sin \left( \left( 1-\frac{\sqrt{26}}{13} \right)s \right)}{-104+8\sqrt{26}}-\frac{1}{2}\sin (s) \right., \\ 
				& \hspace{50pt} \frac{\left( 26-\sqrt{26} \right)\cos \left( \left( 1+\frac{\sqrt{26}}{13} \right)s \right)}{104+8\sqrt{26}}-\frac{\left( \sqrt{26}+26 \right)\cos \left( \left( 1-\frac{\sqrt{26}}{13} \right)s \right)}{-104+8\sqrt{26}}+\frac{1}{2}\cos (s),\, \\ 
				& \hspace{50pt} \left. \frac{5}{4}\cos \left( \frac{\sqrt{26}}{13}s \right) \vphantom{\frac{\left( \sqrt{26}-26 \right)\sin \left( \left( 1+\frac{\sqrt{26}}{13} \right)s \right)}{104+8\sqrt{26}}} \right) \\ 
			\end{split}
		\end{equation*}
		which is a special chosen of general Salkowski curve defined in \cite{Monterde}. The required Frenet elements are
		\begin{equation*}
			\begin{split}
				\vec{T}(s)&=\left( -\cos (s)\cos \left( \frac{\sqrt{26}}{26}s \right)-\frac{\sqrt{26}}{26}\sin (s)\sin \left( \frac{\sqrt{26}}{26}s \right) \right., \\ 
				& \hspace{20pt} \left. -\sin (s)\cos \left( \frac{\sqrt{26}}{26}s \right)+\frac{\sqrt{26}}{26}\cos (s)\sin \left( \frac{\sqrt{26}}{26}s \right),-\frac{5\sqrt{26}}{26}\sin \left( \frac{\sqrt{26}}{26}s \right) \right) \\ 
				\vec{N}(s)&=\left( \frac{5\sqrt{26}}{26}\sin (s),-\frac{5\sqrt{26}}{26}\cos (s),-\frac{\sqrt{26}}{26} \right),\\
				\kappa (s)&=1, \hspace{4pt} \tau (s)=\tan \left( \frac{\sqrt{26}}{26}s \right).
			\end{split}
		\end{equation*}
		By choosing $\theta (s)=\frac{s}{\sqrt{26}}$, we get
		\begin{equation*}
			\begin{split}
				{{{\vec{q}}}_{o}}(s)&=\left( -\frac{\sqrt{26}}{26}\cos \left( \frac{\sqrt{26}}{26}s \right)\sin (s)\sin \left( \frac{\sqrt{26}}{26}s \right)-\cos (s){{\cos }^{2}}\left( \frac{\sqrt{26}}{26}s \right)+\frac{5\sqrt{26}}{26} \right.\sin (s)\sin \left( \frac{\sqrt{26}}{26}s \right), \\ 
				& \hspace{22pt} \frac{\sqrt{26}}{26}\cos \left( \frac{\sqrt{26}}{26}s \right)\cos (s)\sin \left( \frac{\sqrt{26}}{26}s \right)-\frac{5\sqrt{26}}{26}\cos (s)\sin \left( \frac{\sqrt{26}}{26}s \right)-\sin (s){{\cos }^{2}}\left( \frac{\sqrt{26}}{26}s \right), \\ 
				& \hspace{20pt} \left. -\frac{\sqrt{26}}{26}\sin \left( \frac{\sqrt{26}}{26}s \right)\left( 5\cos \left( \frac{\sqrt{26}}{26}s \right)+1 \right) \right) \\
			\end{split}
		\end{equation*}
		Then the parametrization of the OT-ruled surface ${{\varphi }_{_{3}({{\alpha }_{3}},{{q}_{o}})}}$ and the equation of striction line ${{c}_{3}}$ can be written easily from the equalities (\ref{OT-ruledSurfaceEquation}) and (\ref{strictionline}), respectively. This surface has the curvatures
		\begin{equation*}
			\eta (s)=1+\frac{\sqrt{26}}{26}, \hspace{4pt} \xi (s)=\sin \left( \frac{\sqrt{26}}{26}s \right), \hspace{4pt} \mu (s)=\tan \left( \frac{\sqrt{26}}{26}s \right)\sin \left( \frac{\sqrt{26}}{26}s \right),
		\end{equation*}
		and the functions $f$ and $g$ are calculated as
		\begin{equation*}
			f(s,u)=\sin \left( \frac{\sqrt{26}}{26}s \right)+u\left( 1+\frac{\sqrt{26}}{26} \right), \hspace{4pt} g(s,u)=u\tan \left( \frac{\sqrt{26}}{26}s \right)\sin \left( \frac{\sqrt{26}}{26}s \right).
		\end{equation*}
		The graph of ${{\varphi }_{_{3}({{\alpha }_{3}},{{q}_{o}})}}$ for intervals $s\in \left[ -\frac{\sqrt{26}}{2}\pi ,\frac{\sqrt{26}}{2}\pi  \right]$ and $u \in \left[ -0.5,0.5 \right]$ is given in Figure \ref{fig3}. Proposition \ref{strictionlineprop}, the base curve ${{\alpha }_{3}}$ (red) and striction line ${{c}_{3}}$ (blue) intersect at the points ${{\varphi }_{_{3}({{\alpha }_{3}},{{q}_{o}})}} \left( -\frac{\sqrt{26}}{2}\pi, 0 \right) $, ${{\varphi }_{_{3}({{\alpha }_{3}},{{q}_{o}})}} \left( 0, 0 \right) $ and ${{\varphi }_{_{3}({{\alpha }_{3}},{{q}_{o}})}} \left( \frac{\sqrt{26}}{2}\pi, 0 \right) $. All these points are singular points of ${{\varphi }_{_{3}({{\alpha }_{3}},{{q}_{o}})}}$ and given by black in Figure \ref{fig3}.
		
		\begin{figure}[!h]
			\centering
			\begin{minipage}[b]{0.49\textwidth}
				\centering
				\includegraphics[width=\textwidth]{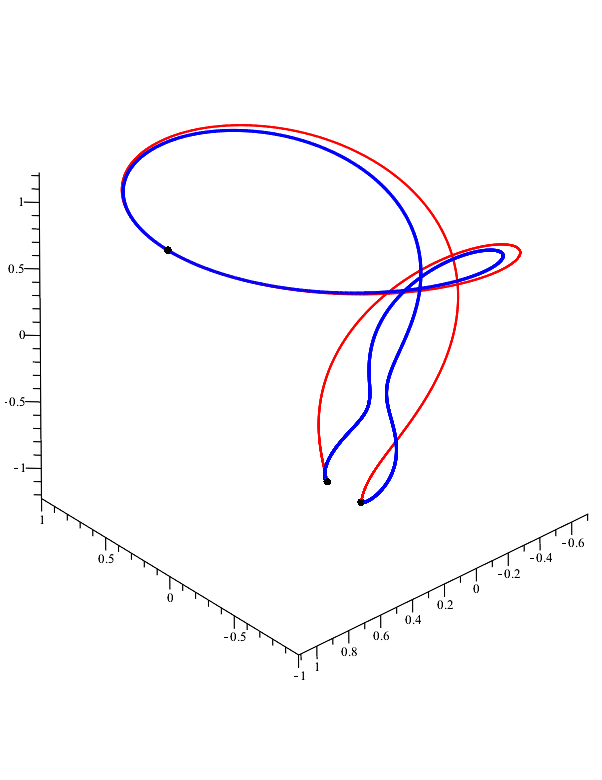}
			\end{minipage}
			\hfill
			\begin{minipage}[b]{0.49\textwidth}
				\centering
				\includegraphics[width=\textwidth]{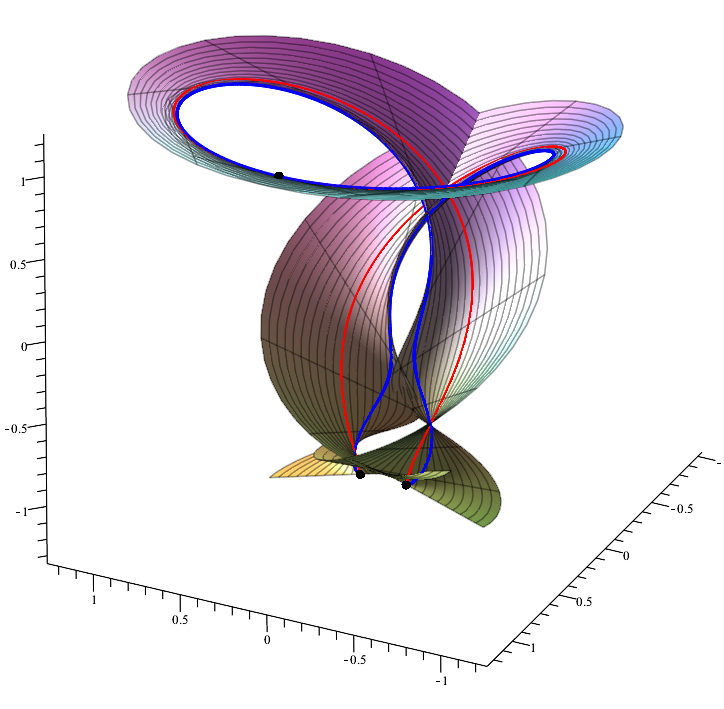}
			\end{minipage}
			\caption{The OT-ruled surface ${{\varphi }_{_{3}({{\alpha }_{3}},{{q}_{o}})}}$}
			\label{fig3}
		\end{figure}
	\end{example}
	
	\section{Conclusions}
	A new type of ruled surfaces has been defined according to the position of the ruling. Taking the ruling on the osculating plane of a curve, these surfaces is defined as osculating type ruled surface or OT-ruled surface. Many properties of such surfaces have been obtained. Of course, this subject can be considered in some other spaces such as Lorentzian space and Galilean space, and properties of OT-ruled surfaces can be given in these spaces according to the characters of base curve and ruling.

\end{document}